\newcommandx{\todoin}[2][1=]{\todo[inline, caption={todo}, #1]{%
    \begin{minipage}{\textwidth-20pt}#2\end{minipage}}}
\newcommandx{\remove}[2][1=]{\todo[linecolor=Plum,backgroundcolor=Plum!25,bordercolor=Plum,#1]{#2}}
\newcommandx{\removein}[2][1=]{\remove[inline, caption={remove}, #1]{%
    \begin{minipage}{\textwidth-20pt}#2\end{minipage}}}
\newcommandx{\remark}[2][1=]{\todo[linecolor=red,backgroundcolor=red!25,bordercolor=red,#1]{#2}}
\newcommandx{\remarkin}[2][1=]{\remark[inline, caption={remark}, #1]{%
    \begin{minipage}{\textwidth-20pt}#2\end{minipage}}}
\newcommandx{\suggestion}[2][1=]{\todo[linecolor=yellow,backgroundcolor=yellow!25,bordercolor=yellow,#1]{#2}}
\newcommandx{\suggestionin}[2][1=]{\suggestion[inline, caption={suggestion}, #1]{%
    \begin{minipage}{\textwidth-20pt}#2\end{minipage}}}
\newcounter{proof}
{\stepcounter{proof}\begin{proof}}%
{\end{proof}}%
\newcounter{proofstep}[proof]
\newcounter{proofstep-noskip}[proof]
\newenvironment{proofstep-noskip}[1][]%
{\refstepcounter{proofstep-noskip}
  \ifnum\the\value{proofstep-noskip}>1
    \bigskip\par\noindent
  \fi
  \ifthenelse{\isempty{#1}}
    {\textit{Step \theproofstep-noskip. }}
    {\textit{#1.}}
  \noindent}%
{\par}%
\newcounter{proofcase}[proof]
{\refstepcounter{proofcase}\bigskip\par\noindent%
  \ifthenelse{\isempty{#1}}
    {\textit{Case \theproofcase. }}
    {\textit{#1.}}
  \noindent}%
{\par}%
\newcounter{proofcase-noskip}[proof]
\newenvironment{proofcase-noskip}[1][]%
{\refstepcounter{proofcase-noskip}
  \ifnum\the\value{proofcase-noskip}>1
    \bigskip\par\noindent
  \fi
  \ifthenelse{\isempty{#1}}
    {\textit{Case \theproofcase-noskip. }}
    {\textit{#1.}}
  \noindent}%
{\par}%
\theoremstyle{plain}
\newtheorem{thm}{Theorem}[section]
\newtheorem*{thm*}{Theorem}
\newtheorem{pro}[thm]{Proposition}
\newtheorem{cor}[thm]{Corollary}
\newtheorem{lem}[thm]{Lemma}
\newtheorem{question}[thm]{Question}
\newtheorem{problem}[thm]{Problem}
\theoremstyle{definition}
\newtheorem{dfn}[thm]{Definition}
\theoremstyle{remark}
\newtheorem{rem}[thm]{Remark}
\numberwithin{equation}{section}
\newcommandx{\textref}[2][1=]{\hyperref[#2]{#1\ref*{#2}}}
\newcommandx{\textrefp}[2][1=]{(\hyperref[#2]{#1\ref*{#2}})}
\newcommand{\dif}{\ensuremath{\, \mathrm d}}
\DeclareMathOperator{\spn}{span}
\DeclareMathOperator*{\ave}{ave}
\newcommand{\vvvert}{{\vert\kern-0.25ex\vert\kern-0.25ex\vert}}
\begin{document}

\title[Dimension dependence of factorization problems]%
{Dimension dependence of factorization problems: Haar system Hardy spaces}%

\author[T.~Speckhofer]{Thomas Speckhofer}%
\address{Thomas Speckhofer, Institute of Analysis, Johannes Kepler University Linz, Altenberger
  Strasse 69, A-4040 Linz, Austria}%
\email{thomas.speckhofer@jku.at}%
\date{\today}%

\subjclass[2020]{%
  47A68, 
  46B07, 
  46B25, 
  46E30, 
  30H10.
}

\keywords{Factorization of operators, dimension dependence, rearrangement-invariant spaces, Hardy spaces.}%

\thanks{The author was supported by the Austrian Science Fund~(FWF), project~I5231.}

\begin{abstract}
  For $n\in \mathbb{N}$, let $Y_n$ denote the linear span of the first $n+1$ levels of the Haar system in a \emph{Haar system Hardy space}~$Y$ (this class contains all separable rearrangement-invariant function spaces and also related spaces such as dyadic~$H^1$). Let $I_{Y_n}$ denote the identity operator on $Y_n$.  We prove the following quantitative factorization result: Fix $\Gamma,\delta,\varepsilon > 0$, and let $n,N\in \mathbb{N}$ be chosen such that $N \ge  Cn^2$, where $C = C(\Gamma,\delta,\varepsilon) > 0$ (this amounts to a quasi-polynomial dependence between $\dim Y_N$ and~$\dim Y_n$). Then for every linear operator~$T\colon Y_N\to Y_N$ with $\|T\|\le \Gamma$, there exist operators~$A,B$ with $\|A\|\|B\|\le 2(1+\varepsilon)$ such that either $I_{Y_n} = ATB$ or $I_{Y_n} = A(I_{Y_N} - T)B$. Moreover, if $T$ has $\delta$-large positive diagonal with respect to the Haar system, then we have $I_{Y_n} = ATB$ for some $A,B$ with $\|A\|\|B\|\le (1+\varepsilon)/\delta$. If the Haar system is unconditional in~$Y$, then an inequality of the form $N \ge C n$ is sufficient for the above statements to hold (hence, $\dim Y_N$ depends polynomially on $\dim Y_n$). Finally, we prove an analogous result in the case where $T$ has large but not necessarily positive diagonal entries.
\end{abstract}

\maketitle

\tableofcontents


\section{Introduction and main results}
\label{sec:introduction-and-main-results}

This article deals with quantitative factorization problems in finite-dimensional Banach spaces. We consider problems of the following form:
\begin{problem}\label{problem-1}
Let $X_0\subset X_1\subset X_2\subset \cdots$ be a sequence of finite-dimensional subspaces of a Banach space~$X$, let $(e_j)_{j=1}^{\infty}$ be a sequence in~$X$, and assume that for every~$n\ge 0$, $(e_j)_{j=1}^{d_n}$ is a basis of $X_n$ (where $d_n = \dim X_n$). Given $\Gamma,\delta,\varepsilon > 0$, find conditions on $n,N\in \mathbb{N}_0 = \mathbb{N}\cup \{ 0 \}$ such that the following statement holds: For every linear operator $T\colon X_N\to X_N$ with $\|T\|\le \Gamma$ and with $\delta$-large positive diagonal with respect to $(e_j)_{j=1}^{d_n}$ (i.e., all diagonal entries of the matrix of~$T$ with respect to $(e_j)_{j=1}^{d_n}$ are greater than or equal to~$\delta$), there exist linear operators $B\colon X_n\to X_N$ and $A\colon X_N\to X_n$ with $\|A\|\|B\|\le \frac{1+\varepsilon}{\delta}$ such that the following diagram is commutative:
\begin{equation*}
  \begin{tikzcd}
    X_n \arrow{r}{I_{X_n}} \arrow[swap]{d}{B} & X_n\\
    X_N \arrow{r}{T}& X_N \arrow[swap]{u}{A}
  \end{tikzcd}
\end{equation*}
In that case, we say that \emph{the identity~$I_{X_n}$ factors through~$T$ with constant~$\frac{1+\varepsilon}{\delta}$}.
\end{problem}
We will also consider variations of \Cref{problem-1}, where the condition on the diagonal of~$T$ is removed and the conclusion is replaced by a factorization through~$T$ or $I_{X_N} - T$, or where the diagonal entries only have to be large \emph{in absolute value}.

In the following, we will always take $X_n$ to be the subspace spanned by the first $n+1$ levels of the Haar system in a fixed underlying space~$Y$. Thus, we put $X_n = Y_n := \operatorname{span}\{ h_I : |I|\ge 2^{-n} \}$, where $(h_I)_{I\in \mathcal{D}}$ denotes the Haar system on $[0,1)$, indexed by the dyadic intervals~$I\in \mathcal{D}$ (and we equip every space~$Y_n$ with its Haar basis). In our results, the space $Y$ will be a \emph{Haar system Hardy space}. This class of Banach spaces was introduced in~\cite{LechnerSpeckhofer2023} and~\cite{LechnerMotakisMuellerSchlumprecht2023}, and the precise definition will be given in~\Cref{sec:haar-system-hardy-spaces}. For now, we only mention that $Y$ is the completion of the linear span of the Haar system on $[0,1)$, either under a rearrangement-invariant norm (such as the $L^p$-norm, where $1\le p\le \infty$, or an Orlicz norm) or under an associated norm that can be defined via the square function---hence, the dyadic Hardy space $H^1$, equipped with an equivalent norm, is also contained in this class.
Factorization results for these \emph{infinite-dimensional} spaces have been proved by Kh.~V.~Navoyan~\cite{MR4635015} and by R.~Lechner and the author~\cite{LechnerSpeckhofer2023}.
Moreover, in~\cite{LechnerMotakisMuellerSchlumprecht2023}, R.~Lechner, P.~Motakis, P.F.X.~Müller and Th.~Schlumprecht proved factorization results for Haar multipliers on \emph{bi-parameter} Haar system Hardy spaces.

Note that in the special case where $X_n = Y_n\subset L^p$ for some $1\le p\le \infty$, \Cref{problem-1} can be solved by applying the \emph{Restricted Invertibility Theorem} by J.~Bourgain and L.~Tzafriri~\cite{MR0890420} (see also~\cite{Spielman2009AnEP,MR3726618,MR4425347}): If $R_N$ is the coordinate projection from~$Y_N$ onto $Z_N :=\operatorname{span}\{ h_I : |I| = 2^{-N} \}$, then $R_NT|_{Z_N}$ can be identified with an operator on $\ell^p_{2^N}$ with large positive diagonal (with respect to the unit vector basis). By \cite[Corollary~3.2]{MR0890420}, this operator is \emph{well invertible} on a large subspace of~$\ell^p_{2^N}$ spanned by $2^n \ge c 2^N$ basis elements, where $c>0$ does not depend on~$N$ (see also~\cite[Section~5]{MR2990628} for more details). Since $Y_n$ can be identified with (a subspace of) $\ell^p_{2^n}$, this yields a factorization of~$I_{Y_n}$ through~$T$. Hence, in this case we have linear dimension dependence. Conversely, note that if $I_{Y_n} = ATB$ for some operators $A,B$ with $\|A\|\|B\|\le C$, then $T$ is well invertible on the image of~$B$. Related results on factorization and restricted invertibility of \emph{continuous matrix functions} can be found in~\cite{MR4059948,MR4543386}.

Quantitative bounds for~$N$ (in terms of~$n$) in \Cref{problem-1} have also been proved for many other classical Banach spaces, and by employing \emph{Bourgain's localization method}~\cite{MR0720307}, it is sometimes possible to prove the primarity of a Banach space by reducing this problem to a finite-dimensional factorization problem. We refer to~\cite{MR0955660,MR1045291,MR1419470,MR2280992,MR2302737,MR2990628,MR3436171,MR3794334,MR3910428,MR4229092} for such factorization and primarity results. In spaces with less structure than $L^p$, however, the best known lower bounds on~$N$ are often super-exponential functions of~$n$.

In the case where $Y$ is the dyadic Hardy space $H^p$, $1\le p<\infty$, or $SL^{\infty}$, R.~Lechner~\cite{MR3990955} obtained a factorization result where $N$ depends linearly on $n$, improving on previous results where the estimate for~$N$ was a nested exponential function of~$n$.
Our main result states that in every Haar system Hardy space, an inequality of the form $N\ge C(\Gamma,\delta,\varepsilon)\, n^2$ (where $C(\Gamma,\delta,\varepsilon) > 0$) is sufficient for the conclusion of \Cref{problem-1} and its variation involving~$T$ and $I_{Y_N} - T$ to hold. Moreover, if the Haar system is unconditional in the space under consideration, then $N\ge C(\Gamma,\delta,\varepsilon,K)\, n$ suffices. In the following, the variable~$\delta$ will either denote a lower bound for the diagonal of an operator~$T$, or it will be omitted (i.e., set to~$1$) to state the result about factorizations through $T$ or~$I_{Y_N} - T$.

\begin{thm}\label{thm:main-result}
  Let $Y$ be a Haar system Hardy space, and let $\Gamma,\delta,\varepsilon > 0$. Put $\eta = \frac{\varepsilon}{6(1+\varepsilon)}$. Moreover, let $n, N\in \mathbb{N}_0$ be chosen such that
  \begin{equation}\label{eq:300}
    N \ge  42n(n+1)\biggl\lceil \frac{\Gamma}{\eta\delta} \biggr\rceil + 42 + \Bigl\lfloor 4\log_2\Bigl(\frac{\Gamma}{\eta\delta}\Bigr) \Bigr\rfloor.
  \end{equation}
  Then for every linear operator $T\colon Y_N\to Y_N$ with $\|T\|\le \Gamma$, the following holds:
  \begin{enumerate}[(i)]
    \item\label{thm:main-result:i} If $\delta = 1$ in~\eqref{eq:300}, then the identity $I_{Y_n}$ factors through $T$ or $I_{Y_N} - T$ with constant $2(1+\varepsilon)$.
    \item\label{thm:main-result:ii} If $T$ has $\delta$-large positive diagonal with respect to the Haar basis, then the identity~$I_{Y_n}$ factors through~$T$ with constant~$\frac{1+\varepsilon}{\delta}$.
  \end{enumerate}
  If the Haar system is $K$-unconditional in~$Y$ for some $K\ge 1$, then inequality~\eqref{eq:300} can be replaced by
  \begin{equation}\label{eq:302}
    N \ge 42n\biggl\lceil \frac{K\Gamma}{\eta\delta} \biggr\rceil + 42 + \Bigl\lfloor 4\log_2\Bigl(\frac{\Gamma}{\eta\delta}\Bigr) \Bigr\rfloor.
  \end{equation}
\end{thm}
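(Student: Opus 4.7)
The plan is to combine a \emph{diagonal dichotomy} that reduces the unsigned factorization statement in part~(i) to the positive-diagonal statement in part~(ii), with an inductive construction of a faithful Haar system inside $Y_N$ on which the operator acts nearly diagonally, and an associated dual projection. The latter yields the factorization $I_{Y_n}=ATB$ through explicit formulas for~$A$ and~$B$.

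For the reduction, observe that for every dyadic $I$ the diagonal entries of $T$ and of $I_{Y_N}-T$ at $h_I$ sum to~$1$, so at least one of them is $\ge 1/2$. A Gamlen-Gaudet/tree-pigeonhole selection on the dyadic tree produces a dyadic subtree on which this choice is uniform; on the associated blocked Haar system, one of $T$ or $I_{Y_N}-T$ has $1/2$-large positive diagonal. The number of Haar levels consumed by this selection is logarithmic in~$\Gamma/(\eta\delta)$, accounting for the additive term $\lfloor 4\log_2(\Gamma/(\eta\delta))\rfloor$ in~\eqref{eq:300} and for the extra factor~$2$ in the constant of part~(i). From here on I may assume that $T$ itself has $\delta$-large positive diagonal.

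The core step is to build inductively, for $k=0,1,\dots,n$, a strictly increasing sequence $0=m(0)<m(1)<\dots<m(n)\le N$ and, for each dyadic $J$ with $|J|=2^{-k}$, a signed block $b_J\in\spn\{h_I : I\subset J,\ |I|=2^{-m(k)}\}$ so that (a)~$(b_J)$ is a $(1+\eta)$-faithful Haar system in $Y$, i.e.\ $h_J\mapsto b_J$ extends to a $(1+\eta)$-isomorphism of $Y_n$ onto~$\spn\{b_J\}$, and (b)~$Tb_J$ agrees with $\delta_J b_J$ up to $Y$-norm error $O(\eta)\|b_J\|_Y$, where $\delta_J\ge\delta$ is inherited from the original diagonal entries. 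The off-diagonal contribution of $T$ against each previously fixed block $b_{J'}$ with $|J'|>|J|$ is suppressed by a pigeonhole over random signings inside $b_J$; each such suppression costs $\lceil\Gamma/(\eta\delta)\rceil$ additional Haar levels, so at step~$k$ one needs $m(k)-m(k-1)\gtrsim k\lceil\Gamma/(\eta\delta)\rceil$, which telescopes to $m(n)\lesssim n(n+1)\lceil\Gamma/(\eta\delta)\rceil$ as in~\eqref{eq:300}. Setting $Bh_J=b_J$ gives $\|B\|\le 1+\eta$; composing the natural rank-$\dim Y_n$ projection onto $\spn\{b_J\}$ (of norm $\le 1+\eta$ by faithfulness) with the coefficientwise map $b_J\mapsto\delta_J^{-1}h_J$ defines~$A$, after which $I_{Y_n}-ATB$ has norm $O(\eta/\delta)$; a Neumann-series correction of~$A$ absorbs this remainder and produces an exact factorization with $\|A\|\|B\|\le(1+\varepsilon)/\delta$. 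The prescribed choice $\eta=\varepsilon/(6(1+\varepsilon))$ is exactly what is needed to push the product of the several $(1+\eta)$-factors below $1+\varepsilon$.

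If the Haar system is $K$-unconditional in $Y$, the cross-interactions in (b) are controlled by a single unconditional bound of size $K$ and do not compound across the earlier levels: each step then costs only $\lceil K\Gamma/(\eta\delta)\rceil$ new Haar levels, giving $m(n)\lesssim n\lceil K\Gamma/(\eta\delta)\rceil$ and hence~\eqref{eq:302}. The main technical obstacle is the construction of step~(a)--(b) in the general conditional case: one must simultaneously maintain faithfulness, near-diagonalization of~$T$, and tight control of the signed-block dependence on \emph{every} previously chosen level, and the bookkeeping must land exactly at the stated quadratic bound with the universal constant~$42$. This is where the Haar system Hardy space structure is used in full, in particular the Khintchine-type estimates on random signings of Haar blocks that power the pigeonhole arguments.
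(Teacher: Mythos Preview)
Your proposal has a genuine gap in the bound on $\|A\|$ in the conditional (non-unconditional) case, and it misidentifies the source of the $n^2$ dependence.

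You define $A$ as the projection onto $\spn\{b_J\}$ composed with the coefficientwise map $b_J\mapsto \delta_J^{-1}h_J$. The projection has norm $\le 1+\eta$, but the second map is a Haar multiplier on $Y_n$ with entries $\delta_J^{-1}$, and these entries may vary freely in $[1/\Gamma,1/\delta]$. In a Haar system Hardy space where the Haar basis is merely monotone, such a multiplier can have norm as large as $\sim n/\delta$ (cf.\ Lemma~\ref{lem:multiplier-lemma}, which gives $\|M-m_{[0,1)}I\|\le \sum_{k=1}^n\max_{I\in\mathcal{D}_k}|m_I-m_{[0,1)}|$). The Neumann-series step fixes $\|ATB-I\|$ but does not repair $\|A\|$ itself, so your factorization constant is $\sim n/\delta$ rather than $(1+\varepsilon)/\delta$. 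What is missing is a \emph{stabilization} of the diagonal: one must arrange that all the $\delta_J$ lie within $O(\eta\delta/(n+1))$ of a single scalar $c$, so that the multiplier with entries $\delta_J^{-1}$ (or rather $c^{-1}$) is close to a scalar. In the paper this is done \emph{after} diagonalization, by a pigeonhole over the $\tilde n$ levels of the intermediate Haar multiplier~$D$: one needs $\tilde n\gtrsim n\cdot \lceil \Gamma(n+1)/(\eta\delta)\rceil$ levels to find $n+1$ frequencies whose diagonal entries are mutually close, and \emph{this} is where the $n^2$ comes from. Your attribution of the $n^2$ to ``each off-diagonal suppression against a previous block costing $\lceil\Gamma/(\eta\delta)\rceil$ levels'' does not match the mechanism; in fact the paper's diagonalization (Proposition~\ref{pro:diagonalization}) handles all off-diagonal terms simultaneously via a single global randomization and a Chebyshev bound, in only $\sim 21n + 4\log_2(\Gamma/(\eta\delta))$ levels---this is also the true origin of the additive $\lfloor 4\log_2(\Gamma/(\eta\delta))\rfloor$ term, not the dichotomy reduction.

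Your reduction for part~(i) is also different from, and more expensive than, the paper's. You propose a Gamlen--Gaudet tree selection to find a subtree on which one of $T$, $I-T$ has uniformly $\tfrac12$-large positive diagonal; such selections are costly (cf.\ Proposition~\ref{pro:positive-reduction}, where a similar sign-selection consumes $\sim N^2 2^N$ levels), and certainly not logarithmic. The paper avoids this entirely by insisting that the intermediate factorization be \emph{projectional} ($AB=I$): then if $cI_{Y_n}$ projectionally factors through $T$, automatically $(1-c)I_{Y_n}$ projectionally factors through $I_{Y_N}-T$ with the \emph{same} $A,B$, and since $\max(c,1-c)\ge\tfrac12$ one gets part~(i) at no extra cost in levels. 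This is the key structural trick you are missing for~(i).
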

In the following, we will denote the right-hand side of~\eqref{eq:300} by $N_{\min}(\Gamma,\delta,\varepsilon,n)$ and the right-hand side of~\eqref{eq:302} by $N_{\min}^K(\Gamma,\delta,\varepsilon,n)$.

\begin{rem}
  Due to inequality~\eqref{eq:300}, we have a quasi-polynomial dependence between $\dim Y_N\sim 2^N$ and~$\dim Y_n\sim 2^n$ in the general case, whereas by~\eqref{eq:302}, we have polynomial dimension dependence in the unconditional case.
\end{rem}

We will prove \Cref{thm:main-result} in \Cref{sec:stabilization}. The basic structure of our proof is similar to that of the proofs in~\cite{MR3990955} and~\cite{LechnerSpeckhofer2023}: Starting with a given operator~$T$, we perform a step-by-step reduction which yields ``simpler'' operators. First, we use a modified version of the probabilistic argument in~\cite{MR3990955} to \emph{diagonalize} the operator~$T$ (up to a small error). Next, we \emph{stabilize} the resulting diagonal operator~$D$ by using a discrete version of the stabilization procedures developed in~\cite{MR4430957,LechnerSpeckhofer2023,LechnerMotakisMuellerSchlumprecht2023} (here, we just use combinatorial arguments instead of cluster points and randomization). Finally, by a perturbation argument, we obtain a constant multiple of the identity operator~$I_{Y_n}$.
In every reduction step, we obtain a factorization (or an approximate factorization) of the newly constructed operator through the previous one, and by combining all these factorizations, we obtain the desired results.

Observe that if the Haar system is $K$-unconditional in~$Y$ for some $K\ge 1$, then \Cref{thm:main-result}~\eqref{thm:main-result:ii} also holds for operators~$T$ for which the \emph{absolute values} of all diagonal entries are at least $\delta$ (we just have to increase the factorization constant by a factor~$K$).
However, by performing an additional reduction step, we also can drop the assumption that~$T$ has positive diagonal entries in the general case. We have the following result:
\begin{cor}\label{cor:large-diagonal}
  Let $Y$ be a Haar system Hardy space, and let $\Gamma,\delta,\varepsilon > 0$. Moreover, let $n,\widetilde{N}\in \mathbb{N}_0$ be chosen such that
  \begin{equation}\label{eq:301}
    \widetilde{N} \ge 2N \bigg\lceil \frac{N}{\varepsilon} + 1 \bigg\rceil 2^{N},
  \end{equation}
  where $N = N_{\min}(2(1+\varepsilon)\Gamma,\delta,\varepsilon,n)$, or where $N = N_{\min}^K(2(1+\varepsilon)\Gamma,\delta,\varepsilon,n)$ if the Haar system is $K$-unconditional in~$Y$. Then for every linear operator $\widetilde{T}\colon Y_{\widetilde{N}}\to Y_{\widetilde{N}}$ with $\|\widetilde{T}\|\le \Gamma$ and with $\delta$-large diagonal with respect to the Haar basis, the identity~$I_{Y_n}$ factors through~$\widetilde{T}$ with constant~$\frac{2(1+\varepsilon)^2}{\delta}$.
\end{cor}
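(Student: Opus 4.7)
The plan is to reduce \Cref{cor:large-diagonal} to \Cref{thm:main-result}~\eqref{thm:main-result:ii} by restricting $\widetilde T$ to a ``faithful Haar subsystem'' of $Y_{\widetilde N}$ on which the diagonal entries of $\widetilde T$ all have the same sign. Set $\epsilon_I=\sign\langle\widetilde T h_I,h_I^*\rangle\in\{\pm 1\}$ for every dyadic interval~$I$ with $|I|\ge 2^{-\widetilde N}$. The main step is to construct a family of block Haar functions $(\tilde h_I)_{|I|\ge 2^{-N}}$ in $Y_{\widetilde N}$, together with an inclusion $J\colon Y_N\to Y_{\widetilde N}$ sending $h_I\mapsto \tilde h_I$ and a projection $P\colon Y_{\widetilde N}\to Y_N$ with $\|P\|\cdot\|J\|\le 2(1+\varepsilon)$, such that $\sign\langle \widetilde T\tilde h_I,\tilde h_I^*\rangle=+1$ for every~$I$.

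To construct the subsystem I would allocate, for each of the $\sim 2^{N+1}$ target Haar functions, a separate block of dyadic intervals in $Y_{\widetilde N}$ of combined depth $\sim 2N\lceil N/\varepsilon+1\rceil$, arranged consistently with the dyadic tree structure of~$Y_N$. Within each such block, the factor $\lceil N/\varepsilon+1\rceil$ supplies enough combinatorial room to select a sub-block whose net diagonal contribution is $+$ no matter how adversarial the assignment $(\epsilon_I)$ is, while simultaneously keeping the distortion of the resulting block Haar system bounded by $1+\varepsilon$; multiplying the per-block cost by the total number~$\sim 2^{N+1}$ of target Haar functions and by the~$N$ nested levels of the target tree gives the total depth requirement encoded in~\eqref{eq:301}. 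Once $(\tilde h_I)$ is in place, the conjugated operator $T=P\widetilde TJ\colon Y_N\to Y_N$ satisfies $\|T\|\le 2(1+\varepsilon)\Gamma$ and has $\delta$-large positive diagonal with respect to the Haar basis. Because $N=N_{\min}(2(1+\varepsilon)\Gamma,\delta,\varepsilon,n)$ (or the $K$-unconditional analogue) was chosen precisely to absorb the factor $2(1+\varepsilon)$ in front of~$\Gamma$, \Cref{thm:main-result}~\eqref{thm:main-result:ii} applies and yields $I_{Y_n}=ATB$ with $\|A\|\|B\|\le(1+\varepsilon)/\delta$. Composing with $J$ on the right and $P$ on the left produces the desired factorization through $\widetilde T$ with constant at most $\|A\|\|P\|\cdot\|J\|\|B\|\le 2(1+\varepsilon)^2/\delta$.

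The hard part is the combinatorial construction of the faithful subsystem: an adversarial sign assignment can prevent any ``strong'' dyadic subtree of the required depth from being monochromatic, so one is forced to use genuine block Haar averages rather than single Haar functions, and then one must verify that the resulting system is $(1+\varepsilon)$-equivalent to the standard Haar basis in an arbitrary Haar system Hardy space. In the rearrangement-invariant case this should reduce to distribution-preserving choices of blocks, and in the $H^1$-type case to square-function/Khintchine-type estimates; in both cases the quantitative price---exactly what~\eqref{eq:301} encodes---comes from needing enough dyadic room to enforce sign uniformity and small distortion at the same time.
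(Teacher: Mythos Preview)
Your high-level reduction matches the paper exactly: build an almost faithful Haar system $(\tilde h_I)_{I\in\mathcal D_{\le N}}$ in $Y_{\widetilde N}$ so that $T:=A\widetilde TB\colon Y_N\to Y_N$ has $\delta$-large positive diagonal and $\|A\|\|B\|\le 2(1+\varepsilon)$, then feed $T$ (with $\|T\|\le 2(1+\varepsilon)\Gamma$) into \Cref{thm:main-result}~\eqref{thm:main-result:ii}. This is precisely the content of \Cref{pro:positive-reduction} followed by the one-line deduction the paper gives.

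The gap is in your sketch of the combinatorics. Allocating a private block of levels to each target $h_I$ and pigeonholing inside it does not survive the nesting constraint: once $\tilde h_{[0,1)}$ is fixed, the collections $\mathcal B_{[0,1/2)},\mathcal B_{[1/2,1)}$ are forced into $\{\tilde h_{[0,1)}=\pm1\}$, and an adversary can arrange that at any given level only half (by measure) of the intervals in those sets carry the desired sign; iterating down the tree loses a factor~$2$ per level and destroys the norm bound on~$A$. The paper avoids this with a global Gamlen--Gaudet argument: with $\mathcal A_1=\{K:\langle h_K,\widetilde Th_K\rangle\ge\delta|K|\}$ one shows (after possibly replacing $\widetilde T$ by $-\widetilde T$) that the generations $\mathcal G_k$ of $\mathcal A_1$ satisfy $|\mathcal G_{lN}^*|\ge\tfrac12|\mathcal G_0^*|$, and a telescoping pigeonhole on the ratios $|\mathcal G_{(j+1)N}^*|/|\mathcal G_{jN}^*|$ locates a window $\mathcal G_s,\dots,\mathcal G_{s+N}$ with total measure loss at most $((N/\varepsilon+1)2^N)^{-1}$. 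The system is then built with $\mathcal B_I\subset\mathcal G_{s+k}$ for $I\in\mathcal D_k$, so every block automatically sits inside $\mathcal A_1$ and nesting costs nothing; the factor $2^N$ in~\eqref{eq:301} enters through the choice of~$l$ needed for this measure estimate, not through the count of target Haar functions as in your accounting. One further point you do not address: even with all $\mathcal B_I\subset\mathcal A_1$, the entry $\langle\tilde h_I,\widetilde T\tilde h_I\rangle$ still contains cross terms $\theta_K\theta_L\langle h_K,\widetilde Th_L\rangle$ for $K\ne L$; the paper kills these in expectation by randomizing $(\theta_K)_{K\in\mathcal B_I}$ and then fixing a realization with value $\ge\delta|\mathcal B_I^*|$.
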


We will prove \Cref{cor:large-diagonal} in \Cref{sec:reduction-to-positive-diagonal}.
Note that by~\eqref{eq:301}, $\dim Y_{\widetilde{N}}$ is a double exponential function of~$(\log\dim Y_n)^2$, whereas in the unconditional case, $\dim Y_{\widetilde{N}}$ depends exponentially on~$\dim Y_n$.

\section{Preliminaries}\label{sec:preliminaries}

\subsection{Notation and basic definitions}
\label{sec:notation}

Let $\mathbb{N}_0 = \mathbb{N}\cup \{ 0 \}$, and let $\mathcal{D}$ denote the collection of all dyadic intervals in~$[0,1)$, i.e.,
\begin{equation*}
  \mathcal{D}
  = \Big\{\Big[\frac{i-1}{2^j},\frac{i}{2^j}\Big) :
  j\in\mathbb{N}_0,\ 1\leq i\leq 2^j\Big\}.
\end{equation*}
In addition, for each $n\in\mathbb{N}_0$, define
\begin{equation*}
  \mathcal{D}_n
  = \{I\in\mathcal{D} : |I| = 2^{-n}\}
  \qquad
  \text{and}
  \qquad
  \mathcal{D}_{\le n}
  = \bigcup_{k=0}^n \mathcal{D}_k,
  \quad
  \mathcal{D}_{<n} = \bigcup_{k=0}^{n-1}\mathcal{D}_k.
\end{equation*}
If $I\in \mathcal{D}$ is a dyadic interval, then we denote by~$I^+$ the left half of~$I$ and by~$I^-$ the right
right half of~$I$ (both are elements of~$\mathcal{D}$). If we use the symbol~$\pm$ multiple times in
an equation, we mean either always~$+$ or always~$-$. For any
subcollection $\mathcal{B}\subset\mathcal{D}$, we put $\mathcal{B}^* = \bigcup_{I\in \mathcal{B}}I$.

Next, we define the bijective function $\iota\colon\mathcal{D}\to\mathbb{N}$ by
\begin{equation*}
  \Big[\frac{i}{2^j},\frac{i+1}{2^j}\Big)
  \overset{\iota}{\mapsto} 2^j + i.
\end{equation*}
The Haar system $(h_I)_{I\in\mathcal{D}}$ is defined as
\begin{equation*}
  h_I
  = \chi_{I^+} - \chi_{I^-},
  \qquad I\in\mathcal{D},
\end{equation*}
where $\chi_A$ denotes the characteristic function of a subset $A\subset [0,1)$.  We additionally
put $h_\varnothing = \chi_{[0,1)}$ and $\mathcal{D}^+ = \mathcal{D}\cup\{\varnothing\}$ as well as
$\iota(\varnothing) = 0$.  Recall that in the linear order induced by $\iota$, the Haar system
$(h_I)_{I\in \mathcal{D}^+}$, is a monotone Schauder basis
of $L^p$, $1\le p<\infty$ (and unconditional if $1<p<\infty$). For
$x = \sum_{I\in \mathcal{D}^+} a_Ih_I\in L^1$, by the \emph{Haar support} of~$x$ we mean the
set of all $I\in \mathcal{D}^+$ such that $a_I\ne 0$. We only consider Banach spaces over the real numbers.
If $\mathcal{B}\subset \mathcal{D}^+$ is a non-empty collection of dyadic intervals with cardinality~$\#\mathcal{B} <\infty$ and
$f\colon \mathcal{B} \to \mathbb{R}$ is a function, then we use the notation
\begin{equation*}
  \ave_{K\in \mathcal{B}} f(K) = \frac{1}{\#\mathcal{B}} \sum_{K\in \mathcal{B}} f(K).
\end{equation*}
Finally, we will occasionally use the symbols $\gtrsim$, $\lesssim$, $\sim$ to suppress constants in inequalities: $a\gtrsim b$ and $b\lesssim a$ both mean that $a\ge Cb$ for an absolute constant $C > 0$, and $a\sim b$ means that $a\gtrsim b$ and $b\gtrsim a$.

\subsection{Haar system Hardy spaces}
\label{sec:haar-system-hardy-spaces}

The class of Haar system Hardy spaces was introduced in~\cite{LechnerSpeckhofer2023,LechnerMotakisMuellerSchlumprecht2023}. To give the definition, we first need the notion of a
\emph{Haar system space} (see~\cite[Section~2.4]{MR4430957}).
\begin{dfn}\label{dfn:HS-1d}
  A \emph{Haar system space $X$} is the completion of
  $H := \spn\{ h_I : I\in\mathcal{D}^+\} = \spn\{\chi_I : I\in\mathcal{D}\}$ under a norm
  $\|\cdot\|_X$ that satisfies the following properties:
  \begin{enumerate}[(i)]
    \item\label{dfn:HS-1d:1} If $f$, $g$ are in $H$ and $|f|$, $|g|$ have the same distribution,
          then $\|f\|_X = \|g\|_X$.
    \item\label{dfn:HS-1d:2} $\|\chi_{[0,1)}\|_X = 1$.
  \end{enumerate}
  We denote the class of Haar system spaces by $\mathcal{H}(\delta)$.  Given
  $X\in\mathcal{H}(\delta)$, one can define the closed subspace $X_0$ of $X$ as the closure of
  $H_0 = \spn\{ h_I : I\in \mathcal{D} \}$ in $X$.  We denote the class of these subspaces by
  $\mathcal{H}_0(\delta)$.
\end{dfn}
Apart from the spaces $L^p$, $1\le p<\infty$, and the closure of $H$ in $L^{\infty}$, the class
$\mathcal{H}(\delta)$ includes all rearrangement-invariant function spaces~$X$ on $[0,1)$ (e.g.,
Orlicz function spaces) in which the linear span~$H$ of the Haar system $(h_I)_{I\in \mathcal{D}^+}$ is
dense. If $H$ is not dense in $X$, then its closure in $X$ is a Haar system space.

The following proposition contains some basic results on Haar system spaces which will be used frequently throughout the paper. For proofs we refer to~\cite[Section~2.4]{MR4430957} and~\cite{LechnerSpeckhofer2023}.

\begin{pro}\label{pro:HS-1d}
  Let $X\in\mathcal{H}(\delta)$ be a Haar system space. Then the following holds:
  \begin{enumerate}[(i)]
    \item\label{pro:HS-1d:i} For every $f\in H = \spn\{h_I\}_{\in\mathcal{D}^+}$, we have
          $\|f\|_{L^1}\leq \|f\|_X\leq \|f\|_{L^\infty}$.
    \item\label{pro:HS-1d:v} For all $f,g\in H$ with $|f|\leq |g|$, we have $\|f\|_X \leq \|g\|_X$.
    \item\label{pro:HS-1d:ii} The Haar system $(h_I)_{I\in \mathcal{D}^+}$, in the usual linear
          order, is a monotone Schauder basis of~$X$.
    \item \label{pro:HS-1d:iii} $H$ naturally coincides with a subspace of $X^{*}$ (where $f\in H$ acts on $g\in H$ by $\langle f,g \rangle = \int fg$), and its closure in $X^{*}$ is also a Haar system space.
    \item For every $I\in \mathcal{D}$, we have $\|h_I\|_X\|h_I\|_{X^{*}} = |I|$.
  \end{enumerate}
\end{pro}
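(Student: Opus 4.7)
The plan is to derive all five statements from the two defining axioms of a Haar system space---rearrangement invariance and the normalization $\|\chi_{[0,1)}\|_X = 1$---combined with the fact that elements of $H$ are simple functions constant on the dyadic partition $\mathcal{D}_n$ for sufficiently large $n$. I would proceed in roughly the following order: first the $L^1$ lower bound in~\eqref{pro:HS-1d:i}, then the monotonicity~\eqref{pro:HS-1d:v} (the main obstacle), then the $L^\infty$ upper bound, the basis property~\eqref{pro:HS-1d:ii}, and finally the duality assertions~\eqref{pro:HS-1d:iii} and the last item.

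For the lower bound $\|f\|_{L^1}\le \|f\|_X$, I would use a permutation-averaging argument: given $f\in H$, fix $n$ so that $|f| = \sum_{I\in\mathcal{D}_n} |c_I|\chi_I$, and observe that the measure-preserving dyadic permutations of $\mathcal{D}_n$ send $|f|$ to functions with the same distribution, hence the same $X$-norm by~\eqref{dfn:HS-1d:1}. Averaging all $(2^n)!$ permuted copies yields the constant function $\|f\|_{L^1}\chi_{[0,1)}$, and the triangle inequality together with~\eqref{dfn:HS-1d:2} gives the bound.

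The hard part is the monotonicity~\eqref{pro:HS-1d:v}: rearrangement invariance does not by itself imply that the norm respects the pointwise order of $|f|$ and $|g|$. The plan is first to reduce to the case where $f,g$ are non-negative dyadic step functions with $f\le g$ (using~\eqref{dfn:HS-1d:1}), then write $g = f + h$ with a non-negative dyadic step function~$h$, and establish $\|f\|_X\le \|f+h\|_X$ via an averaging argument over the dyadic automorphisms fixing the equimeasurability class of $f+h$---this is a dyadic analogue of the Calder\'on--Mityagin comparison and is the technical heart of the proposition. Once monotonicity is in place, the upper bound $\|f\|_X\le \|f\|_{L^\infty}$ in~\eqref{pro:HS-1d:i} follows at once from $|f|\le \|f\|_{L^\infty}\chi_{[0,1)}$ combined with~\eqref{dfn:HS-1d:2}.

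For the monotone Schauder basis property~\eqref{pro:HS-1d:ii}, I would show contractivity of the conditional expectation projections $\mathbb{E}_n$ onto level-$n$ step functions: $\mathbb{E}_n f$ is the average over the dyadic permutations of $f$ that fix the level-$n$ $\sigma$-algebra, so~\eqref{dfn:HS-1d:1} and the triangle inequality give $\|\mathbb{E}_n f\|_X\le \|f\|_X$; the partial-sum projections in the Haar order differ by rank-one corrections that are contractive for the same symmetry reason. Finally, for~\eqref{pro:HS-1d:iii} and the last item, the pairing $\langle f,g\rangle = \int fg$ defines an injection $H\hookrightarrow X^{*}$, and~\eqref{pro:HS-1d:i} together with the already-proved rearrangement invariance shows that the induced dual norm is itself rearrangement invariant and normalized at $\chi_{[0,1)}$. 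The identity $\|h_I\|_X\|h_I\|_{X^{*}} = |I|$ reduces, via $|h_I|=\chi_I$ and rearrangement invariance, to the classical relation $\phi_X(t)\phi_{X^{*}}(t) = t$ for the fundamental function $\phi_X(t) = \|\chi_{[0,t)}\|_X$, which follows from the H\"older-type inequality $\langle \chi_I,\chi_I\rangle = |I|$ and a matching lower bound obtained by testing $\chi_I$ against the normalized element $\chi_I/\|\chi_I\|_X$ in the dual.
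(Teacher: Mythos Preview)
The paper does not prove this proposition in-text; it simply writes ``For proofs we refer to~\cite[Section~2.4]{MR4430957} and~\cite{LechnerSpeckhofer2023}.'' So there is nothing to compare against directly, and your outline has to stand on its own. Most of it is sound: the permutation-averaging argument for the $L^1$ lower bound in~\eqref{pro:HS-1d:i} is correct, and your approach to the monotone-basis property~\eqref{pro:HS-1d:ii} works because every Haar partial-sum projection is the conditional expectation onto the $\sigma$-algebra generated by a (possibly inhomogeneous) dyadic partition, hence an average of measure-preserving rearrangements and therefore a contraction by the defining axiom.

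The one genuine gap is in your sketch for the monotonicity~\eqref{pro:HS-1d:v}. ``Averaging over the dyadic automorphisms fixing the equimeasurability class of $f+h$'' does not yield a proof: permutations of the level sets of $g=f+h$ do not express $f$ as a convex combination of functions equimeasurable with~$g$. What works instead is a \emph{sign-flip} argument. Reduce to non-negative step functions $f\le g$ on~$\mathcal{D}_n$; by iterating over the intervals it suffices to treat $f=g-t\chi_{I_0}$ for a single $I_0\in\mathcal{D}_n$ and $0\le t\le d_{I_0}:=g|_{I_0}$. The map $t\mapsto\|g-t\chi_{I_0}\|_X$ is convex and equals $\|g\|_X$ at both $t=0$ and $t=2d_{I_0}$ (since $|g-2d_{I_0}\chi_{I_0}|=g$, so the distributions coincide), hence it is $\le\|g\|_X$ on the whole interval. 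This is still a convex-combination-of-equimeasurable-functions argument in spirit, but the relevant symmetries are sign changes, not permutations of the domain. Finally, for item~(v) note that the direction $\|\chi_I\|_{X^*}\le |I|/\|\chi_I\|_X$ is the nontrivial one: it requires the conditional-expectation contractivity applied to $g\chi_I$ (using~\eqref{pro:HS-1d:v} to pass from $g$ to $g\chi_I$), whereas testing against $\chi_I/\|\chi_I\|_X$ only yields the reverse inequality.
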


Next, by $(r_n)_{n=0}^{\infty}$, we denote the sequence of standard Rademacher functions, i.e.,
\begin{equation*}
  r_n = \sum_{I\in \mathcal{D}_n} h_I,\qquad n\in \mathbb{N}_0.
\end{equation*}
We define the set
\begin{equation*}
  \mathcal{R} = \{ (r_{\iota(I)})_{I\in \mathcal{D}^+}, (r_0)_{I\in \mathcal{D}^+} \}.
\end{equation*}
Hence, if $\mathbf{r} = (r_I)_{I\in \mathcal{D}^+}\in \mathcal{R}$, then $\mathbf{r}$ is either an
independent sequence of $\pm 1$-valued random variables or a constant
sequence.

\begin{dfn}\label{dfn:haar-system-hardy-space}
  Given $X\in\mathcal{H}(\delta)$ and
  $\mathbf{r} = (r_I)_{I\in\mathcal{D}^+}\in\mathcal{R}$, we define the \emph{(one-parameter) Haar
    system Hardy space $X(\mathbf{r})$} as the completion of
  $H = \spn\{ h_I : I\in \mathcal{D}^+ \}$ under the norm $\|\cdot\|_{X(\mathbf{r})}$ given by
  \begin{equation*}
    \Bigl\| \sum_{I\in\mathcal{D}^+} a_I h_I\Bigr\|_{X(\mathbf{r})}
    = \Bigl\|
    s\mapsto \int_0^1 \Bigl|
    \sum_{I\in\mathcal{D}^+} r_I(u) a_I h_I(s)
    \Bigr| \dif u
    \Bigr\|_X.
  \end{equation*}
  We denote the class of one-parameter Haar system Hardy spaces by $\mathcal{HH}(\delta)$.
  Moreover, given $X(\mathbf{r})\in\mathcal{HH}(\delta)$, we define the subspace
  $X_0(\mathbf{r})$ as the closure of $H_0 = \operatorname{span}\{ h_I \}_{I\in \mathcal{D}}$ in~$X(\mathbf{r})$.
  For notational convenience, we will also refer to the subspaces $X_0(\mathbf{r})$ as Haar system Hardy spaces.
  We denote the class of these subspaces by $\mathcal{H}\mathcal{H}_0(\delta)$.
\end{dfn}
Note that if $r_I = r_0$ for all $I\in\mathcal{D}^+$, then we have $\|\cdot \|_{X(\mathbf{r})} = \|\cdot \|_X$ and thus $X(\mathbf{r}) = X$, so $\mathcal{HH}(\delta)$ is an extension of $\mathcal{H}(\delta)$. Moreover, if $\mathbf{r}$ is independent, then by Khintchine's inequality, the norm on $X(\mathbf{r})$ can be expressed in terms of the square function:
\begin{equation*}
  \Bigl\| \sum_{I\in \mathcal{D}^+}a_Ih_I \Bigr\|_{X(\mathbf{r})} \sim \Bigl\| \Bigl( \sum_{I\in \mathcal{D}^+} a_I^2h_I^2 \Bigr)^{1/2} \Bigr\|_X.
\end{equation*}
Thus, for $X = L^1$ and an independent Rademacher sequence~$\mathbf{r}$, we obtain the dyadic Hardy space~$H^1$, equipped with an equivalent norm.

In the next proposition, we collect some basic properties of Haar system Hardy spaces. They are either proved in~\cite{LechnerSpeckhofer2023}, or they follow by a simple direct argument or calculation.

\begin{pro}\label{pro:HSHS-1d}
  Let $X(\mathbf{r})\in\mathcal{HH}(\delta)$, and put $Y = X_0(\mathbf{r})$. Then following holds:
  \begin{enumerate}[(i)]
    \item The Haar system $(h_I)_{I\in \mathcal{D}}$, in the usual linear
          order, is a monotone Schauder basis of~$Y$, and it is $1$-unconditional if $\mathbf{r}$ is independent.
    \item $H_0 = \operatorname{span}\{ h_I \}_{I\in \mathcal{D}}$ naturally coincides with a subspace of $Y^{*}$, and if $\mathbf{r}$ is independent, then $(h_I)_{I\in \mathcal{D}}$ is $1$-unconditional in $Y^{*}$.
    \item\label{pro:HSHS-1d:iii} For every $I\in \mathcal{D}$, we have $\|h_I\|_Y\|h_I\|_{Y^{*}} = |I|$. In particular, we have $\|h_I\|_Y = \|h_I\|_X$ and $\|h_I\|_{Y^{*}} = \|h_I\|_{X^{*}}$.
    \item If $f$ is a finite linear combination of disjointly supported Haar functions, then we have $\|f\|_{X(\mathbf{r})} = \|f\|_X$.
    \item\label{pro:HSHS-1d:single-layer} If $x = \sum_{I\in \mathcal{D}} a_Ih_I\in H_0$, then for every $k\in \mathbb{N}_0$, we have $\|\sum_{I\in \mathcal{D}_k}a_Ih_I\|_Y\le \|x\|_Y$. Thus, $\|a_Ih_I\|_Y\le \|x\|_Y$ for all $I\in \mathcal{D}$.
    \item\label{pro:HSHS-1d:v} If $\mathcal{B}\subset \mathcal{D}$ is a finite collection of pairwise disjoint dyadic intervals, then $(h_K)_{K\in \mathcal{B}}$ is $1$-unconditional in~$X(\mathbf{r})^{*}$, and we have $\|\sum_{K\in \mathcal{B}}h_K\|_{X(\mathbf{r})^{*}}\le 1$.
  \end{enumerate}
\end{pro}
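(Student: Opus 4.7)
The plan is to establish the six items by direct manipulation of the defining formula
\[
\Bigl\|\sum_I a_I h_I\Bigr\|_{X(\mathbf{r})} = \Bigl\|s\mapsto \int_0^1 \Bigl|\sum_I r_I(u) a_I h_I(s)\Bigr| \dif u\Bigr\|_X
\]
combined with the properties of the underlying Haar system space $X$ from \Cref{pro:HS-1d}. A natural order is (iv), (i), (v), (iii), (ii), (vi), since later items rely on earlier ones.

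Property (iv) is immediate: for $f$ a disjointly supported Haar combination, at each point $s$ at most one term in $\sum_I r_I(u) a_I h_I(s)$ is non-zero, so the integrand is independent of $u$ and equals $|f(s)|$; hence $\|f\|_{X(\mathbf{r})} = \|f\|_X$ by (i) of \Cref{pro:HS-1d}. For (i) of the present proposition, when $\mathbf{r}$ is independent the Haar partial sum $P_n$ corresponds, inside the defining formula, to a conditional expectation in $u$ with respect to $\sigma(r_I : \iota(I) \le n)$, which contracts the $L^1_u$-norm; together with monotonicity (v) of \Cref{pro:HS-1d} this gives $\|P_n x\|_{X(\mathbf{r})} \le \|x\|_{X(\mathbf{r})}$. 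The constant-$\mathbf{r}$ case reduces to the monotone basis property in $X$ itself, and $1$-unconditionality for independent $\mathbf{r}$ follows because $(\epsilon_I r_I)_I$ has the same distribution as $(r_I)_I$ for any signs $\epsilon_I$.

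Property (v) splits into two cases. When $\mathbf{r}$ is independent, fix $s$, let $K_s\in \mathcal{D}_k$ be the unique interval containing $s$, and set $b_I := a_I h_I(s)$; conditioning on $\{r_I : I \ne K_s\}$ and using $|T+b|+|T-b|\ge 2|b|$ yields the pointwise bound $|b_{K_s}| \le \int_0^1 \bigl|\sum_I r_I(u) b_I\bigr|\dif u$, and combining with (iv) and monotonicity in $X$ gives the claim. When $\mathbf{r}$ is constant (so $X(\mathbf{r}) = X$), one first shows the pointwise inequality $|d_k(s)| \le \frac{1}{|K|}\int_K|x|$ for $s\in K\in \mathcal{D}_k$, where $d_k = \sum_{I \in \mathcal{D}_k} a_I h_I$: integrating $|x|$ separately over $K^+$ and $K^-$ and using that the deeper contributions $\sum_{I\subsetneq K} a_I h_I$ have mean zero on each half gives $\int_K |x| \ge |K|\,|a_K|$. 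The contraction in the $X$-norm of the level-$k$ averaging operator $x \mapsto \sum_{K\in \mathcal{D}_k}(\tfrac{1}{|K|}\int_K|x|)\chi_K$ then follows by writing it as an average over the finite group of measure-preserving dyadic permutations within each $K$ and applying the triangle inequality in $X$ together with property (i) of \Cref{pro:HS-1d}.

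The remaining items are short corollaries. Property (iii): (iv) gives $\|h_I\|_Y = \|h_I\|_X$, and evaluating the dual pairing at $g = h_I$ (with the bound $|a_I| \le \|g\|_Y/\|h_I\|_Y$ from (v)) yields $\|h_I\|_{Y^*} = |I|/\|h_I\|_Y = \|h_I\|_{X^*}$. Property (ii): any finite $f = \sum b_I h_I \in H_0$ acts continuously on $Y$ via $g\mapsto \sum a_I b_I |I|$, with norm bounded by $\sum |b_I|\,\|h_I\|_{Y^*}$ via (v) and (iii); $1$-unconditionality in $Y^*$ for independent $\mathbf{r}$ follows from (i) by duality. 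For (vi), the norm bound uses that $|\sum_K r_K(u) h_K(s)| \le 1$ pointwise (since $\mathcal{B}$ is pairwise disjoint), together with the identity $\int g\cdot \sum_K h_K = \int g_u h_u$ for every $u$ (from Haar orthogonality, where $g_u, h_u$ denote the $u$-randomized versions); averaging over $u$ and applying (i) of \Cref{pro:HS-1d} bounds $|\int g \sum_K h_K|$ by $\|g\|_{X(\mathbf{r})}$. The $1$-unconditionality in $X(\mathbf{r})^*$ follows from (i) by duality when $\mathbf{r}$ is independent, and for constant $\mathbf{r}$ by extending the permutation-averaging argument to show that the projection onto $\spn\{h_K : K \in \mathcal{B}\}$ is contractive in $X$. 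The main obstacle, I expect, is the constant-$\mathbf{r}$ case of (v): naive basis monotonicity only gives constant $2$, so achieving constant $1$ requires the dyadic conditional-expectation/permutation-averaging argument together with property (i) of \Cref{pro:HS-1d}.
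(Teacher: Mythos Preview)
The paper does not prove this proposition: it simply states that the items are either proved in \cite{LechnerSpeckhofer2023} or follow by a simple direct argument. Your proposal therefore supplies far more than the paper does, and your arguments are essentially correct and well organized. In particular, you correctly identify that the constant-$\mathbf{r}$ case of (v) cannot be obtained from basis monotonicity alone (which only gives constant $2$) and that the pointwise bound $|a_K|\le\frac{1}{|K|}\int_K|x|$ combined with the permutation-averaging contraction of the dyadic conditional expectation is what is needed. Your treatment of (i), (iii), (iv) and the norm bound in (vi) is clean.

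One genuine small gap: in (vi) for constant $\mathbf{r}$, showing that the projection onto $\spn\{h_K:K\in\mathcal{B}\}$ is contractive in $X$ does \emph{not} yield $1$-unconditionality of $(h_K)_{K\in\mathcal{B}}$ in $X^*$; writing the sign-change operator as a difference of two such projections only gives constant $2$. The fix is immediate, though: for $K\in\mathcal{B}$ let $\sigma_K$ be the measure-preserving bijection swapping $K^+$ and $K^-$. Since the intervals in $\mathcal{B}$ are pairwise disjoint, for $f=\sum_{L\in\mathcal{B}}b_Lh_L$ one has $f\circ\sigma_K=f-2b_Kh_K$, while $|f\circ\sigma_K|$ and $|f|$ are equidistributed; \Cref{dfn:HS-1d}\eqref{dfn:HS-1d:1} then gives $1$-unconditionality in $X$, and self-adjointness of the sign-change operator transfers it to $X^*$. (Equivalently, invoke \Cref{pro:HS-1d}\eqref{pro:HS-1d:iii} to say that $H$ with the $X^*$-norm is itself a Haar system space and run the same distributional argument there.) A minor bookkeeping point: some of your references to ``(i) of \Cref{pro:HS-1d}''---in the arguments for (iv) and for the averaging step in (v)---should point to \Cref{dfn:HS-1d}\eqref{dfn:HS-1d:1}, the rearrangement-invariance; the citation is correct in your norm-bound argument for (vi), where you do use $\|\cdot\|_{L^1}\le\|\cdot\|_X$.
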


In the following, $Y$ will always denote a Haar system Hardy space $X_0(\mathbf{r})\in \mathcal{HH}_0(\delta)$, and we will consider operators defined on the subspaces $Y_n = \operatorname{span}\{ h_I \}_{I\in \mathcal{D}_{\le n}}\subset Y$, $n\in \mathbb{N}_0$.

\subsection{Faithful Haar systems}

Next, we introduce a finite version of the concept of a \emph{faithful Haar system}. A faithful Haar system is a system of functions which are blocks of the Haar system and share many structural properties with the original Haar system. For example, the supports of these functions exhibit the same intersection pattern as the supports of the original Haar functions.
The term \emph{faithful Haar system} was coined in~\cite{MR4430957}, but the construction can already be found, e.g., in~\cite[p.~51]{MR0402915}.
We will also use the more general notion of an \emph{almost faithful Haar system}, which allows for gaps between the supports of the functions (cf.~\cite{LechnerSpeckhofer2023}).
These types of constructions originated in classical works such as the paper~\cite{MR0328575} by J.~L.~B.~Gamlen and R.~J.~Gaudet. For more information on these and related concepts, such as \emph{Jones' compatibility conditions}~\cite{MR0796906}, we refer to~\cite{MR2157745}. A brief overview can also be found in~\cite[Section~5]{LechnerSpeckhofer2023}.
\begin{dfn}\label{dfn:faithful}
  Let $n\in \mathbb{N}_0$. For every $I\in \mathcal{D}_{\le n}$, let $\mathcal{B}_I$ be a non-empty finite subcollection of $\mathcal{D}$, and moreover, let $(\theta_K)_{K\in \mathcal{D}}$ be a family of signs. Put $\tilde{h}_I = \sum_{K\in \mathcal{B}_I} \theta_Kh_K$ for $I\in \mathcal{D}_{\le n}$. We say that $(\tilde{h}_I)_{I\in \mathcal{D}_{\le n}}$ is a (finite) \emph{almost faithful Haar system} if the following conditions are satisfied:
\begin{enumerate}[(i)]
  \item\label{dfn:faithful:i} For every $I\in \mathcal{D}_{\le n}$, the collection $\mathcal{B}_I$ consists of pairwise disjoint dyadic intervals, and we have $\mathcal{B}_I\cap \mathcal{B}_J = \emptyset$ for all $I\ne J\in \mathcal{D}_{\le n}$.
  \item\label{dfn:faithful:ii} For every $I\in \mathcal{D}_{<n}$, we have $\mathcal{B}_{I^{\pm }}\subset \{ \tilde{h}_I = \pm 1 \}$.
\end{enumerate}
If $\mathcal{B}_{[0,1)}^{*} = [0,1)$ and $\mathcal{B}_{I^{\pm }} = \{ \tilde{h}_I = \pm 1 \}$ for all $I\in \mathcal{D}_{<n}$, then we say that $(\tilde{h}_I)_{I\in \mathcal{D}_{\le n}}$ is \emph{faithful}, and in this case, we usually denote the system by $(\hat{h}_I)_{I\in \mathcal{D}_{\le n}}$. If $k_0<k_1<\dots<k_n$ is a strictly increasing sequence of integers such that $\mathcal{B}_I\subset \mathcal{D}_{k_i}$ for all $I\in \mathcal{D}_i$, $0\le i\le n$, then we say that $(\tilde{h}_I)_{I\in \mathcal{D}_{\le n}}$ is an almost faithful Haar system \emph{with frequencies $k_0,\dots,k_n$} (note that in the terminology of~\cite{LechnerSpeckhofer2023}, the frequencies would be denoted as $(k_I)_{I\in \mathcal{D}_{\le n}}$, where $k_I = k_i$ whenever $|I| = 2^{-i}$). An example of a (finite) faithful Haar system is depicted in \Cref{fig:faithful-haar-system}.
\end{dfn}

\begin{figure}[h]
  \centering \includegraphics{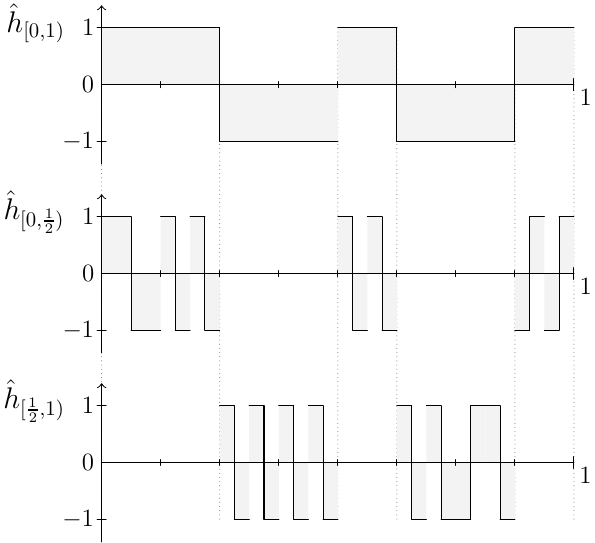}
  \caption{A finite faithful Haar system}
  \label{fig:faithful-haar-system}
\end{figure}

If $(\hat{h}_I)_{I\in \mathcal{D}_{\le n}}$ is a faithful Haar system and $N\in \mathbb{N}_0$ is chosen such that $\mathcal{B}_I\subset \mathcal{D}_{\le N}$ for all $I\in \mathcal{D}_{\le n}$, then we can always define two associated operators:
\begin{pro}\label{pro:operators-Ahat-Bhat}
  Let $Y\in \mathcal{HH}_0(\delta)$ be a Haar system Hardy space. Fix $n,N\in \mathbb{N}_0$, and let $(\hat{h}_I)_{I\in \mathcal{D}_{\le n}}$ be a (finite) faithful Haar system in $Y_N$. Define the associated operators $\hat{B}\colon Y_n\to Y_N$ and $\hat{A}\colon Y_N\to Y_n$ by
  \begin{equation*}
    \hat{B}x = \sum_{I\in \mathcal{D}_{\le n}} \frac{\langle h_I, x \rangle}{|I|} \hat{h}_I
    \qquad \text{and}\qquad
    \hat{A}y = \sum_{I\in \mathcal{D}_{\le n}} \frac{\langle \hat{h}_I, y \rangle}{|I|} h_I
  \end{equation*}
  for $x\in Y_n$ and $y\in Y_N$. Then we have $\hat{A}\hat{B} = I_{Y_n}$ and $\|\hat{A}\| = \|\hat{B}\| = 1$.
\end{pro}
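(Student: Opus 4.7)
The plan is to first establish the biorthogonality relations $\langle \hat{h}_I, \hat{h}_J\rangle = |I|\,\delta_{IJ}$ (which immediately yield $\hat{A}\hat{B} = I_{Y_n}$), and then to prove the two norm estimates $\|\hat{B}\|, \|\hat{A}\| \le 1$ via an underlying isometric invariance; the matching lower bounds $\ge 1$ are automatic from $\hat{A}\hat{B} = I_{Y_n}$. The first ingredient is the measure identity $|\mathcal{B}_I^*| = |I|$ for every $I \in \mathcal{D}_{\le n}$, which follows by induction on the depth of $I$: the base case is faithfulness, $\mathcal{B}_{[0,1)}^* = [0,1)$, and the inductive step uses that $\hat{h}_I$ is $\{-1,0,+1\}$-valued with $\int \hat{h}_I = 0$, forcing $|\{\hat{h}_I = +1\}| = |\{\hat{h}_I = -1\}|$; combined with $\mathcal{B}_{I^\pm}^* = \{\hat{h}_I = \pm 1\}$ this gives $|\mathcal{B}_{I^\pm}^*| = |I|/2$. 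Biorthogonality then follows: $\langle \hat{h}_I, \hat{h}_I\rangle = |\mathcal{B}_I^*| = |I|$; if $I \subsetneq J$, iterating the inclusions $\mathcal{B}_{K^\pm} \subset \{\hat{h}_K = \pm 1\}$ along the chain from $J$ down to $I$ shows $\hat{h}_J$ is constant on $\mathcal{B}_I^*$, so $\int \hat{h}_I \hat{h}_J = 0$; and for $I, J$ incomparable the same tree-tracing yields $\mathcal{B}_I^* \cap \mathcal{B}_J^* = \emptyset$.

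The main technical claim is the isometric identity
\[
  \Bigl\| \sum_I a_I \hat{h}_I \Bigr\|_Y = \Bigl\| \sum_I a_I h_I \Bigr\|_Y \qquad \text{for every } (a_I)_{I \in \mathcal{D}_{\le n}}.
\]
To prove it, I would analyse the integrand of $\|\cdot\|_{X(\mathbf{r})}$ pointwise in $s$. By the faithful structure, each $s \in [0,1)$ determines a unique chain $J_0(s) \supset \dots \supset J_n(s)$ in $\mathcal{D}_{\le n}$ with $\hat{h}_{J_k(s)}(s) \ne 0$, and for each $k$ a unique $K_k(s) \in \mathcal{B}_{J_k(s)}$ containing $s$. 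The Rademachers $(r_{K_k(s)})_{k=0}^n$ appearing in the expansion of $\sum a_I \hat{h}_I$ at this point are distinct (hence independent), and the signs $\theta_{K_k(s)} h_{K_k(s)}(s) \in \{\pm 1\}$ can be absorbed by symmetry. The integrand at $s$ therefore reduces to $\varphi(a_{J_0(s)},\dots,a_{J_n(s)})$, where $\varphi(b_0,\dots,b_n) := \int_0^1 |\sum_k b_k \xi_k(u)|\dif u$ for any independent Rademacher sequence $(\xi_k)$. The analogous computation for $\sum a_I h_I$ gives $\varphi(a_{I_0(s)},\dots,a_{I_n(s)})$, where $I_k(s)$ is the unique element of $\mathcal{D}_k$ containing $s$. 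Using $|\mathcal{B}_{J_n}^*| = 2^{-n}$, both of these functions of $s$ take the same set of values, indexed bijectively by length-$(n+1)$ chains in $\mathcal{D}_{\le n}$, each on a preimage of measure $2^{-n}$, so they share a distribution on $[0,1)$. Rearrangement invariance (\Cref{dfn:HS-1d}~\eqref{dfn:HS-1d:1}) then yields equal $\|\cdot\|_X$-norms. This gives $\|\hat{B}\| = 1$.

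For $\|\hat{A}\| \le 1$ I would pass to the adjoint. Using the pairing of \Cref{pro:HSHS-1d}~\eqref{pro:HSHS-1d:iii}, a direct calculation shows that $\hat{A}^* : Y_n^* \to Y_N^*$ acts on Haar coefficients by $\sum \phi_I h_I \mapsto \sum \phi_I \hat{h}_I$, i.e., it is the $\hat{B}$-type operator in the dual. Since the closure of $H_0$ in $Y^*$ again carries a Haar system (Hardy) space structure (cf.\ \Cref{pro:HS-1d}~\eqref{pro:HS-1d:iii}), the distributional argument above applies verbatim there and gives $\|\hat{A}^*\| = 1$, hence $\|\hat{A}\| = 1$. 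The main obstacle is the pointwise distributional identity in the Hardy setting: one must track which independent Rademacher is attached to which Haar index through the faithful expansion, and only after unwinding the chain structure at a fixed $s$ and using independence and symmetry to relabel does the problem reduce to the clean measure-theoretic match between dyadic chains and faithful chains.
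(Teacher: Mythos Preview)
Your biorthogonality and $\hat{A}\hat{B}=I_{Y_n}$ arguments are fine, and your distributional proof that $\hat B$ is an isometry is essentially correct (the presentation via $\varphi$ with independent Rademachers only literally covers the case of independent $\mathbf r$; when $\mathbf r$ is constant the integrand is just $|f(s)|$, and you should instead argue directly that the sign sequence $(\sigma_0(s),\dots,\sigma_n(s))$ with $\sigma_k=\hat h_{J_k}(s)$ is uniformly distributed on $\{\pm1\}^{n+1}$, which is the same measure computation you already do). This direct distributional route is more explicit than the paper's proof, which simply extends the finite faithful system to an infinite one and quotes \cite[Proposition~7.1]{LechnerSpeckhofer2023}.

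The genuine gap is in your treatment of $\|\hat A\|$. You claim that the closure of $H_0$ in $Y^{*}$ ``again carries a Haar system (Hardy) space structure'' and cite \Cref{pro:HS-1d}~\eqref{pro:HS-1d:iii}; but that statement is about Haar system spaces $X\in\mathcal H(\delta)$, not about $X_0(\mathbf r)$ with independent $\mathbf r$. For instance, if $X=L^1$ and $\mathbf r$ is independent, then $Y\sim H^1$ and $Y^{*}\sim\mathrm{BMO}$, whose norm is not of the form $\|\cdot\|_{Z(\mathbf r')}$ for any $Z\in\mathcal H(\delta)$, so your distributional argument cannot be rerun verbatim in the dual. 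The clean repair is exactly what the paper does: extend $(\hat h_I)_{I\in\mathcal D_{\le n}}$ to an infinite faithful Haar system $(\hat h_I)_{I\in\mathcal D}$; your own argument then shows that $\tilde B\colon h_I\mapsto\hat h_I$ is an isometric isomorphism of $Y$, and one checks that $\hat A = P_n\,\tilde B^{-1}\big|_{Y_N}$, where $P_n$ is the norm-$1$ basis projection onto $Y_n$ (monotonicity of the Haar basis). Equivalently, $(\tilde B^{-1})^{*}$ is an isometry of $Y^{*}$ sending $h_I$ to $\hat h_I$, which gives $\|\hat A^{*}\|=1$ without any structural assumption on $Y^{*}$. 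Either way, closing this step forces you back to the infinite extension that the paper invokes.
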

\begin{proof}
  This result follows from \cite[Proposition~7.1]{LechnerSpeckhofer2023} by suitably extending $(\hat{h}_I)_{I\in \mathcal{D}_{\le n}}$ to an \emph{infinite} faithful Haar system $(\hat{h}_I)_{I\in \mathcal{D}}$ (see \cite[Definition~5.1]{LechnerSpeckhofer2023}) and restricting the associated operators $\hat{B}$ and $\hat{A}$ to $Y_n$ and $Y_N$, respectively.
\end{proof}

In order to prove an analogous statement for \emph{almost} faithful Haar systems, we will need an upper bound for the norm of a Haar multiplier, i.e., a linear operator $M\colon Y_n\to Y_n$ with $M h_I = m_I h_I$ for all $I\in \mathcal{D}_{\le n}$, where $(m_I)_{I\in \mathcal{D}_{\le n}}$ is a family of scalars (the \emph{entries} of~$M$). In our finite-dimensional setting, the following inequality yields a better result than the bound in \cite[Lemma~4.5]{LechnerSpeckhofer2023}.
\begin{lem}\label{lem:multiplier-lemma}
  Let $Y$ be a Haar system Hardy space, and let $n\in \mathbb{N}_0$. Then for every Haar multiplier $M\colon Y_n\to Y_n$ with entries $(m_I)_{I\in \mathcal{D}_{\le n}}$, we have
  \begin{equation*}
    \|M - m_{[0,1)}I_{Y_n}\| \le \sum_{k=1}^n \max_{I\in \mathcal{D}_k} |m_I - m_{[0,1)}|.
  \end{equation*}
\end{lem}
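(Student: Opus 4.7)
The plan is to decompose $M - m_{[0,1)} I_{Y_n}$ into single-level Haar multipliers, estimate each one pointwise, and then sum via the triangle inequality. First, I note that $M - m_{[0,1)} I_{Y_n}$ is itself a Haar multiplier, with entries $\tilde m_I = m_I - m_{[0,1)}$; in particular its entry on the interval $[0,1) \in \mathcal{D}_0$ vanishes. So it can be written as $\sum_{k=1}^n M_k$, where $M_k \colon Y_n \to Y_n$ is the single-level multiplier
\begin{equation*}
  M_k x \;=\; \sum_{I \in \mathcal{D}_k} \tilde m_I \,\frac{\langle h_I, x\rangle}{|I|}\, h_I.
\end{equation*}

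The main step is then to show $\|M_k\| \le \max_{I \in \mathcal{D}_k} |\tilde m_I|$. By \Cref{pro:HSHS-1d}~\eqref{pro:HSHS-1d:single-layer}, the level-$k$ projection has norm at most $1$, so it suffices to bound the norm of multiplication by $\tilde m_I$ on a sum $y = \sum_{I \in \mathcal{D}_k} a_I h_I$ supported on one level. Here I exploit that the intervals in $\mathcal{D}_k$ are pairwise disjoint: pointwise,
\begin{equation*}
  \Bigl|\sum_{I \in \mathcal{D}_k} \tilde m_I a_I h_I(s)\Bigr|
  \;=\; \sum_{I \in \mathcal{D}_k} |\tilde m_I|\,|a_I|\,\chi_I(s)
  \;\le\; \max_{I \in \mathcal{D}_k}|\tilde m_I| \cdot \Bigl|\sum_{I \in \mathcal{D}_k} a_I h_I(s)\Bigr|.
\end{equation*}
Since both functions are linear combinations of disjointly supported Haar functions, their $X(\mathbf{r})$-norms agree with their $X$-norms by \Cref{pro:HSHS-1d}~(iv); the monotonicity property \Cref{pro:HS-1d}~\eqref{pro:HS-1d:v} then converts the pointwise inequality into a norm inequality, giving $\|M_k y\|_Y \le \max_{I \in \mathcal{D}_k}|\tilde m_I|\cdot \|y\|_Y$.

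Combining these bounds with the triangle inequality yields
\begin{equation*}
  \|M - m_{[0,1)} I_{Y_n}\| \;\le\; \sum_{k=1}^n \|M_k\| \;\le\; \sum_{k=1}^n \max_{I \in \mathcal{D}_k} |m_I - m_{[0,1)}|,
\end{equation*}
as desired. I do not foresee a genuine obstacle; the only mildly subtle point is the detour through \Cref{pro:HSHS-1d}~(iv) to transfer the pointwise domination from the $X(\mathbf{r})$-setting to the rearrangement-invariant $X$-setting, where monotonicity is available.
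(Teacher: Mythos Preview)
Your proof is correct and follows essentially the same approach as the paper: decompose $M - m_{[0,1)}I_{Y_n}$ by levels, use the triangle inequality, bound each level-$k$ piece by $\max_{I\in\mathcal{D}_k}|\tilde m_I|$ times the norm of the level-$k$ part of $x$, and then invoke \Cref{pro:HSHS-1d}~\eqref{pro:HSHS-1d:single-layer}. The paper's proof is slightly terser---it writes the inequality $\|\sum_{I\in\mathcal{D}_k}\tilde m_I a_I h_I\|_Y \le \max_{I\in\mathcal{D}_k}|\tilde m_I|\cdot\|\sum_{I\in\mathcal{D}_k}a_I h_I\|_Y$ without further comment (relying on the $1$-unconditionality of a single level)---whereas you spell this out via pointwise domination, \Cref{pro:HSHS-1d}~(iv), and \Cref{pro:HS-1d}~\eqref{pro:HS-1d:v}; but this is the same argument with more detail, not a different route.
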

\begin{proof}
  Let $x = \sum_{I\in \mathcal{D}_{\le n}} a_Ih_I\in Y_n$. Then we have
  \begin{align*}
    \|Mx - m_{[0,1)}x\|_Y &= \Big\| \sum_{k=1}^n\sum_{I\in \mathcal{D}_k} (m_I - m_{[0,1)})a_Ih_I \Big\|_Y\\
    &\le \sum_{k=1}^n \max_{I\in \mathcal{D}_k} |m_I - m_{[0,1)}| \Big\| \sum_{I\in \mathcal{D}_k} a_Ih_I \Big\|_Y,
  \end{align*}
  and the conclusion follows since by \Cref{pro:HSHS-1d}~\eqref{pro:HSHS-1d:single-layer}, we have $\|\sum_{I\in \mathcal{D}_k} a_Ih_I\|_Y \le \|x\|_Y$ for all~$k$.
\end{proof}

\begin{pro}\label{pro:operators-A-B-almost-faithful}
  Let $Y$ be a Haar system Hardy space, and let $n,N\in \mathbb{N}_0$. Moreover, let $(\tilde{h}_I)_{I\in \mathcal{D}_{\le n}}$ be an almost faithful Haar system in~$Y_N$, and for each $I\in \mathcal{D}_{\le n}$, let $\mathcal{B}_I$ denote the Haar support of~$\tilde{h}_I$. Let $B\colon Y_n\to Y_N$ and $A\colon Y_N\to Y_n$  be the associated operators defined by
  \begin{equation*}
    Bx = \sum_{I\in \mathcal{D}_{\le n}} \frac{\langle h_I, x \rangle}{|I|} \tilde{h}_I\qquad
    \text{and}\qquad
    Ay = \sum_{I\in \mathcal{D}_{\le N}} \frac{\langle \tilde{h}_I, y \rangle}{|\mathcal{B}_I^{*}|} h_I
  \end{equation*}
  for $x\in Y_n$ and $y\in Y_N$. Put $\mu = |\mathcal{B}_{[0,1)}^{*}|$. Then we have $AB = I_{Y_n}$ as well as
  \begin{equation*}
    \|B\| \le 1\qquad
    \text{and}\qquad
    \|A\| \le \frac{1}{\mu} + \sum_{k=1}^n \max_{I\in \mathcal{D}_k} \bigg( \frac{|I|}{|\mathcal{B}_I^{*}|} - \frac{1}{\mu} \bigg).
  \end{equation*}
\end{pro}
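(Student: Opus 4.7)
My plan is to verify $AB = I_{Y_n}$ by direct computation and then to estimate $\|B\|$ and $\|A\|$ separately, the latter via a factorization $A = MA_0$ combined with \Cref{lem:multiplier-lemma}.

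For the identity, observe that $Bh_J = \tilde{h}_J$ for every $J \in \mathcal{D}_{\le n}$, so it suffices to verify the orthogonality relation $\langle \tilde{h}_I, \tilde{h}_J\rangle = |\mathcal{B}_I^*|\,\delta_{IJ}$. When $I = J$, disjointness of $\mathcal{B}_I$ together with $\theta_K^2 = 1$ give $\langle \tilde{h}_I, \tilde{h}_I\rangle = \sum_{K \in \mathcal{B}_I}|K| = |\mathcal{B}_I^*|$. When $J$ is a strict ancestor of $I$, iterating condition~(ii) of \Cref{dfn:faithful} forces $\tilde{h}_J$ to be constant on $\mathcal{B}_I^*$, so the inner product reduces to $\pm\int \tilde{h}_I = 0$. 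When $I$ and $J$ are incomparable, iterating~(ii) downward from their common ancestor shows that $\mathcal{B}_I^*$ and $\mathcal{B}_J^*$ are disjoint.

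For $\|B\| \le 1$, I would extend $(\tilde{h}_I)_{I \in \mathcal{D}_{\le n}}$ to an infinite almost faithful Haar system $(\tilde{h}_I)_{I \in \mathcal{D}}$ by inductively choosing admissible blocks inside $\mathcal{B}_{I^{\pm}}$ (and beneath the leaves $I \in \mathcal{D}_n$), and then invoke \cite[Proposition~7.1]{LechnerSpeckhofer2023} to obtain norm one for the associated operator on $Y$. Restricting to $Y_n$ preserves the bound, exactly as in the proof of \Cref{pro:operators-Ahat-Bhat}.

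For the bound on $\|A\|$, I would factor $A = MA_0$, where $M$ is the Haar multiplier on $Y_n$ with entries $m_I = |I|/|\mathcal{B}_I^*|$ and $A_0 y = \sum_{I \in \mathcal{D}_{\le n}}|I|^{-1}\langle \tilde{h}_I, y\rangle\, h_I$. A short adjoint computation identifies $A_0^*$ with a $B$-type operator acting on $\varphi = \sum_I d_I h_I \in Y_n^*$ by $\varphi \mapsto \sum_I d_I \tilde{h}_I$; since the closure of $H$ in $Y^*$ is again a Haar system space by \Cref{pro:HS-1d}, the argument of the previous paragraph transfers to the dual side and yields $\|A_0\| = \|A_0^*\| \le 1$. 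Meanwhile $m_{[0,1)} = 1/\mu$, and induction along the tree using condition~(ii) of \Cref{dfn:faithful} gives $|\mathcal{B}_I^*| \le \mu|I|$, hence $m_I \ge 1/\mu$; \Cref{lem:multiplier-lemma} applied to $M - \tfrac{1}{\mu}I_{Y_n}$ then gives
\begin{equation*}
  \|M\| \le \frac{1}{\mu} + \sum_{k=1}^{n}\max_{I \in \mathcal{D}_k}\Bigl(\frac{|I|}{|\mathcal{B}_I^*|} - \frac{1}{\mu}\Bigr),
\end{equation*}
and the claim follows from $\|A\| \le \|M\|\,\|A_0\|$. The main subtlety I foresee is the duality step: one must be careful that the extension trick used for $\|B\| \le 1$ really does transfer to the operator $A_0^*$ acting on the dual Haar system space, rather than being tied to the primal norm only.
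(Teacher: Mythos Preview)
Your overall architecture coincides with the paper's: define the auxiliary operator $Q y = \sum_{I}|I|^{-1}\langle \tilde h_I,y\rangle h_I$ (your $A_0$), factor $A = MQ$ with the Haar multiplier $M$ having entries $m_I = |I|/|\mathcal{B}_I^{*}|$, and then apply \Cref{lem:multiplier-lemma}. The paper does exactly this, and your explicit verification of $AB = I_{Y_n}$ is a welcome addition that the paper omits.

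The one genuine difference is how $\|Q\|\le 1$ is obtained. The paper simply appeals to the proof of \cite[Theorem~7.3]{LechnerSpeckhofer2023}, which establishes the bounds $\|B\|\le 1$ and $\|Q\|\le 1$ directly for almost faithful systems in Haar system Hardy spaces, without passing to the dual. Your route via $A_0^{*}$ is natural, but the justification you give is not quite right: \Cref{pro:HS-1d}\eqref{pro:HS-1d:iii} asserts that the closure of $H$ in $X^{*}$ is a Haar system space only when $X\in\mathcal{H}(\delta)$, whereas here $Y = X_0(\mathbf r)\in\mathcal{HH}_0(\delta)$, and nothing in the paper says that the closure of $H_0$ in $Y^{*}$ is again a Haar system Hardy space when $\mathbf r$ is independent. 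So your self-identified ``main subtlety'' is a real gap as written. It can be closed either by invoking \cite[Theorem~7.3]{LechnerSpeckhofer2023} directly (as the paper does), or by checking that the concrete estimates used in that proof go through in the dual norm; but the blanket citation of \Cref{pro:HS-1d} does not suffice.
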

\begin{proof}
  Let $Q\colon Y_N\to Y_n$ be defined by
  \begin{equation*}
    Qy = \sum_{I\in \mathcal{D}_{\le N}} \frac{\langle \tilde{h}_I, y \rangle}{|I|} h_I,\qquad  y\in Y_N.
  \end{equation*}
  Like in the proof of \cite[Theorem~7.3]{LechnerSpeckhofer2023}, we see that the linear operators $B$ and~$Q$ satisfy $\|B\|\le 1$ and $\|Q\|\le 1$. Note that $A = MQ$, where $M\colon Y_n\to Y_n$ is the Haar multiplier defined by
  \begin{equation*}
    Mh_I = \frac{|I|}{|\mathcal{B}_I^{*}|}h_I,\qquad I\in \mathcal{D}_{\le n}.
  \end{equation*}
  Thus, the conclusion follows from \Cref{lem:multiplier-lemma}.
\end{proof}

\subsection{Factorization of operators}

Finally, we introduce some convenient terminology for stating factorization results and describing properties of linear operators and their diagonals (cf.~\cite{MR4430957,LechnerSpeckhofer2023}).
\begin{dfn}\label{dfn:factorization-modes}
  Let $X$ and $Y$ denote Banach spaces. Let $S\colon X\to X$ and $T\colon Y\to Y$ be bounded linear operators, and let
  $C, \eta \ge 0$.
  \begin{enumerate}[(i)]
    \item\label{enu:dfn:factorization-modes:a} We say that \emph{$S$ factors through $T$ with
          constant $C$ and error $\eta$} if there exist bounded linear operators $B\colon X\to Y$ and $A\colon Y\to X$ with
          $\|A\|\|B\|\leq C$ such that $\|S - ATB\|\leq \eta$.
    \item\label{enu:dfn:factorization-modes:a2} If~\eqref{enu:dfn:factorization-modes:a} holds and
          we additionally have $AB = I_X$, then we say that $S$ \emph{projectionally} factors
          through $T$ with constant $C$ and error $\eta$.
  \end{enumerate}
  If we omit the phrase ``with error $\eta$'' in~\eqref{enu:dfn:factorization-modes:a}
  or~\eqref{enu:dfn:factorization-modes:a2}, then we take that to mean that the error is~$0$.
\end{dfn}

The following observation will play an important role in the proof of \Cref{thm:main-result}~\eqref{thm:main-result:i}: Let $X,Y$ and $S,T$ be as above and suppose that $S$ \emph{projectionally} factors through~$T$ with constant~$C$ and error~$\eta$.  Then $I_X - S$ projectionally factors through $I_Y - T$ with the same constant and error (cf.~\cite[Remark~2.2]{MR4430957}).

\begin{dfn}\label{dfn:factorization-properties}
  Let $Y$ be a Haar sytem Hardy space, and for $n\in \mathbb{N}_0$, let $Y_n = \operatorname{span}\{ h_I \}_{I\in \mathcal{D}_{\le n}}\subset Y$.
  Moreover, let $\delta > 0$, and let $T\colon Y_n\to Y_n$ be a linear operator. We say that
          \begin{itemize}
            \item $T$ is a \emph{Haar multiplier} (or \emph{diagonal with respect to the Haar system}) if we have
                  $\langle h_I, Th_J \rangle = 0$ for all $I,J\in \mathcal{D}_{\le n}$ with $I\ne J$.
            \item $T$ has \emph{$\delta$-large diagonal (with respect to the Haar system)} if
                  $|\langle h_I, Th_I \rangle| \ge \delta |I|$ for all $I\in \mathcal{D}_{\le n}$.
            \item $T$ has $\delta$-large \emph{positive} diagonal if
                  $\langle h_I, Th_I \rangle \ge \delta |I|$ for all $I\in \mathcal{D}_{\le n}$.
          \end{itemize}
\end{dfn}



\section{Diagonalization via random faithful Haar systems}

As a first step towards proving our main results, we reduce an arbitrary linear operator $T\colon Y_N\to Y_N$ to a Haar multiplier $D\colon Y_n\to Y_n$, where $N$ depends linearly on $n$. This is achieved by constructing a randomized faithful Haar system $(\hat{h}_I)_{I\in \mathcal{D}_{\le n}}$ in $Y_N$ and using the associated operators $\hat{A},\hat{B}$ as defined in~\Cref{pro:operators-Ahat-Bhat}: By a probabilistic argument based on~\cite{MR3990955}, we show that for at least one realization of our random system, we have an approximate factorization $D = \hat{A}T\hat{B} + \Delta$, where $D$ is a Haar multiplier and $\Delta$ is a small error. Moreover, if~$T$ has $\delta$-large positive diagonal with respect to the Haar system for some $\delta > 0$, then the diagonalization preserves this property (the new diagonal entries will only be slightly smaller).

In~\cite{MR3990955}, R.~Lechner used the following randomized construction to prove a diagonalization result in the case where $Y$ is the dyadic Hardy space $H^p$, $1\le p< \infty$, or $SL^{\infty}$: The first step is to select pairwise disjoint finite collections $\mathcal{B}_I\subset \mathcal{D}_{\le N}$, which determine a faithful Haar system
\begin{equation*}
  b_I = \sum_{K\in \mathcal{B}_I} h_K,\qquad I\in \mathcal{D}_{\le n}.
\end{equation*}
Next, random signs $\theta = (\theta_K)_{K\in \mathcal{D}_{\le N}}$ are introduced to obtain the functions
\begin{equation*}
  b_I^{(\theta)} = \sum_{K\in \mathcal{B}_I} \theta_Kh_K,\qquad I\in \mathcal{D}_{\le n},\ \theta\in \{ \pm 1 \}^{\mathcal{D}_{\le N}}.
\end{equation*}
Note that in general, $(b_I^{(\theta)})_{I\in \mathcal{D}_{\le n}}$ is \emph{not} a faithful Haar system since the newly introduced signs $\theta_K$ can break property~\eqref{dfn:faithful:ii} of \Cref{dfn:faithful}. At this point, the proof in~\cite{MR3990955} exploits the fact that the Haar system $(h_K)_{K\in \mathcal{D}}$ is $1$-unconditional in~$H^p$ and~$SL^{\infty}$: For each~$\theta$, the function~$b_I^{(\theta)}$ is obtained by applying a fixed Haar multiplier of norm~$1$ to $b_I$. Thus, the associated operators $\hat{A}$ and $\hat{B}$, defined like in \Cref{pro:operators-Ahat-Bhat} with respect to $(b_I^{(\theta)})_{I\in \mathcal{D}_{\le n}}$, are also bounded, and for at least one choice of~$\theta$, they yield the desired approximate factorization $D = \hat{A}T \hat{B} + \Delta$.

In our case, however, $Y$ is an arbitrary Haar system Hardy space, so in general we cannot use unconditionality to complete the proof as described above. However, a modified randomization procedure, which always yields a \emph{faithful} Haar system, enables us to prove the diagonalization result. Finally, it is worth noting that our resulting Haar multiplier $D$ is already stable along every level of the Haar system, i.e., if $I,J\in \mathcal{D}_{\le n}$ satisfy $|I| = |J|$, then we have $d_I\approx d_J$ (we will utilize this property later).

\begin{pro}\label{pro:diagonalization}
  Let $Y = X_0(\mathbf{r})$ be a Haar system Hardy space, and let $\Gamma,\delta,\eta > 0$. Moreover, let $n,N\in \mathbb{N}_0$ be chosen such that
  \begin{equation*}
    N\ge 21(n+1) + \Bigl\lfloor 4\log_2\Bigl(\frac{\Gamma}{\eta\delta}\Bigr) \Bigr\rfloor.
  \end{equation*}
  Then for every linear operator $T\colon Y_N\to Y_N$ with $\|T\|\le \Gamma$, there exists a Haar multiplier $D\colon Y_n\to Y_n$ with entries $(d_I)_{I\in \mathcal{D}_{\le n}}$ such that $D$ projectionally factors through~$T$ with constant~$1$ and error~$\eta\delta$. Moreover, the following holds:
  \begin{enumerate}[(i)]
    \item\label{pro:diagonalization:item:1} We have $|d_I|\le \|T\|$ for all $I\in \mathcal{D}_{\le n}$.
    \item\label{pro:diagonalization:item:2} We have $|d_I - d_J| \le 8^{-n}\eta\delta$ whenever $I,J\in \mathcal{D}_{\le n}$ satisfy $|I| = |J|$.
    \item\label{pro:diagonalization:item:3} If $T$ has $\delta$-large positive diagonal, then $D$ has $(1-\eta)\delta$-large positive diagonal.
  \end{enumerate}
\end{pro}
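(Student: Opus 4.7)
My strategy is to construct a \emph{random} faithful Haar system $(\hat{h}_I)_{I\in\mathcal{D}_{\le n}}$ in $Y_N$ and to apply \Cref{pro:operators-Ahat-Bhat} to its associated operators $\hat{A}, \hat{B}$. Because every realization produces a bona fide faithful Haar system, one automatically obtains $\hat{A}\hat{B} = I_{Y_n}$ and $\|\hat{A}\| = \|\hat{B}\| = 1$, so the factorization constant is $1$ for \emph{every} outcome. The task reduces to finding a single realization for which $\hat{A}T\hat{B}$ differs in operator norm from a suitable Haar multiplier $D$ by at most $\eta\delta$, and for which $D$ satisfies the three listed properties.

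The construction proceeds recursively. Pick an increasing sequence of frequencies $k_0 < k_1 < \cdots < k_n \le N$ with the gaps $k_{i+1}-k_i$ growing linearly in $n$ plus an additive $\lfloor 4\log_2(\Gamma/(\eta\delta))\rfloor$ term---these are forced by the hypothesis on $N$. Set $\mathcal{B}_{[0,1)}=\mathcal{D}_{k_0}$ and draw independent uniform signs $\theta_K\in\{\pm 1\}$ to form $\hat{h}_{[0,1)}=\sum_{K\in\mathcal{B}_{[0,1)}}\theta_K h_K$. Given $\hat{h}_I$ for $I\in\mathcal{D}_i$ with $i<n$, let $\mathcal{B}_{I^\pm}$ be the collection of all dyadic intervals of length $2^{-k_{i+1}}$ contained in $\{\hat{h}_I=\pm 1\}$, and draw fresh independent signs for them. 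By design, every realization yields a faithful Haar system inside $Y_N$.

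Computing in the Haar basis, the $(I,J)$-entry of $\hat{A}T\hat{B}$ equals $|I|^{-1}\sum_{K\in\mathcal{B}_I,\,L\in\mathcal{B}_J}\theta_K\theta_L\langle h_K,Th_L\rangle$. For $I\ne J$ the disjointness $\mathcal{B}_I\cap\mathcal{B}_J=\emptyset$ makes this a mean-zero Rademacher quadratic form whose deviation is controlled by a Khintchine/Hoeffding-type bound in terms of $\Gamma$ and $|\mathcal{B}_I|^{-1/2}$. A union bound over the $\lesssim 4^n$ off-diagonal index pairs, combined with the large size of each $\mathcal{B}_I$ enforced by the frequency gaps, gives a realization with $\|\hat{A}T\hat{B}-D\|\le\eta\delta$, where $D$ is the Haar multiplier with entries $d_I=|I|^{-1}\langle \hat{h}_I,T\hat{h}_I\rangle$. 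An analogous concentration estimate for $d_I$ around its expectation $|I|^{-1}\sum_{K\in\mathcal{B}_I}\langle h_K,Th_K\rangle$ yields~(iii) when $T$ has $\delta$-large positive diagonal, while~(i) follows directly from the duality bound $\|\hat{h}_I\|_Y\|\hat{h}_I\|_{Y^{*}}\le|I|$ implicit in \Cref{pro:HSHS-1d}~\eqref{pro:HSHS-1d:iii}.

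The main obstacle is~(ii), which demands $|d_I-d_J|\le 8^{-n}\eta\delta$ across all pairs $I,J$ at the same level, even though $\mathcal{B}_I$ and $\mathcal{B}_J$ live in disjoint regions of $[0,1)$ where the local averages of the diagonal of $T$ need not a priori agree. This is precisely where the term $21(n+1)$ in the hypothesis plays a role: taking the gaps $k_{i+1}-k_i$ sufficiently large (roughly $6n$ plus the log term) makes each $|\mathcal{B}_I|$ so large that a further level-by-level concentration/pigeonhole argument on the sign randomness can simultaneously equalize all $d_I$ at each level $i$ to within the required precision, once more by a union bound over the $\lesssim 4^n$ pairs per level. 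Collecting the off-diagonal estimate, the diagonal estimate, and the within-level stability into a single joint union bound, and checking that the hypothesis on $N$ leaves enough room to accommodate all three, produces a realization satisfying all required properties.
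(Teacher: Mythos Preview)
Your randomized faithful Haar system matches the paper's construction, but two specifics are off and the second is a real gap. On frequencies: you take gaps $k_{i+1}-k_i$ of size ``roughly $6n$ plus the log term,'' which sums to $k_n\gtrsim n^2$ and overshoots the hypothesis $N\approx 21n$. The paper instead uses \emph{consecutive} frequencies $m,m+1,\dots,m+n$ with a single large offset $m\approx 20n+4\log_2(\Gamma/(\eta\delta))$. This is not mere bookkeeping: with gap~$1$, each $\mathcal{B}_I$ (for $|I|=2^{-k}$) contains exactly one interval inside every $\bar K\in\mathcal{D}_m$, and that interval is uniformly distributed over the $2^k$ level-$(m{+}k)$ subintervals of~$\bar K$, independently across~$\bar K$.

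That uniform distribution is precisely the mechanism for~(ii) that you are missing. You write that $\mathcal{B}_I$ and $\mathcal{B}_J$ ``live in disjoint regions of $[0,1)$'' and then invoke an extra concentration/pigeonhole step to equalize their local diagonal averages; but in this construction every $\mathcal{B}_I$ is spread across all of $[0,1)$, and the \emph{full} expectation is
\[
\mathbb{E}\,d_I \;=\; \ave_{K\in\mathcal{D}_{m+k}}\frac{\langle h_K,Th_K\rangle}{|K|},
\]
which depends only on $|I|=2^{-k}$. Hence~(ii) drops out of the \emph{same} Chebyshev/union bound used for the off-diagonal entries and for~(iii); your proposed center $|I|^{-1}\sum_{K\in\mathcal{B}_I}\langle h_K,Th_K\rangle$ is itself random and is the wrong target. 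One further remark: the variance bound is not a generic Khintchine/Hoeffding estimate. The paper bounds $\mathbb{E}\langle\hat h_I,T\hat h_J\rangle^2$ once by $\|T\|^2\|h_{\bar K}\|_{Y^*}$ and once by $\|T\|^2\|h_{\bar K}\|_Y$ (using $1$-unconditionality of disjointly supported Haar functions in~$Y$ and in~$Y^*$) and takes the geometric mean to obtain $\Gamma^2|\bar K|^{1/2}=\Gamma^2 2^{-m/2}$; this duality trick is where the Haar-system-Hardy-space structure actually enters.
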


\begin{proof}
Fix $n\in \mathbb{N}_0$.
Let $m \in \mathbb{N}_0$ be the smallest integer for which
\begin{equation}\label{eq:8}
  2^m > \frac{2^{4(n+2)}\Gamma^4}{\eta_0^4},
  \qquad \text{where}\qquad
  \eta_0 = \frac{\eta\delta}{2^{4n+2}}.
\end{equation}
Put $M = 2^m$ and fix an integer $N\ge n + m$, i.e.,
\begin{equation*}
  N \ge n + 1 + \Bigl\lfloor 4(n+2) + 4 \log_2\Bigl( \frac{\Gamma}{\eta\delta} \Bigr) + 4(4n+2) \Bigr\rfloor = 21n + 17 + \Bigl\lfloor 4\log_2\Bigl(\frac{\Gamma}{\eta\delta}\Bigr) \Bigr\rfloor.
\end{equation*}
Moreover, let $T\colon Y_N\to Y_N$ be a linear operator with $\|T\|\le \Gamma$. We will construct a (finite) faithful Haar system $(\hat{h}_I)_{I\in \mathcal{D}_{\le n}}$ in~$Y_N$ which almost diagonalizes the operator~$T$. To this end, let $\theta = (\theta_K)_{K\in \mathcal{D}_{\le N}}$ be uniformly chosen from $\{ \pm 1 \}^{\mathcal{D}_{\le N}}$. We denote the uniform measure on $\{ \pm 1 \}^{\mathcal{D}_{\le N}}$ by~$\mathbb{P}$ and the corresponding expected value and variance by $\mathbb{E}$ and~$\mathbb{V}$, respectively. Moreover, for $K\in \mathcal{D}_{\le N}$, let $\mathbb{E}_K$ act on a function of $\theta$ by averaging it over $\theta_K = \pm 1$ (thus, $\mathbb{E}$ is the composition of all $\mathbb{E}_K$, $K\in \mathcal{D}_{\le N}$).

Now we inductively construct a randomized faithful Haar system $(\hat{h}_I(\theta))_{I\in \mathcal{D}_{\le n}}$ by putting $\mathcal{B}_{[0,1)}(\theta) = \mathcal{D}_{m}$ and
\begin{equation}\label{eq:1}
  \hat{h}_I(\theta) = \sum_{K\in \mathcal{B}_I(\theta)} \theta_Kh_K,
  \qquad I\in \mathcal{D}_{\le n},
\end{equation}
where
\begin{equation*}
  \mathcal{B}_{I^{\pm }}(\theta) = \bigl\{ K\in \mathcal{D}_{m + k + 1} : K\subset \{ \hat{h}_I(\theta) = \pm 1 \} \bigr\},
  \qquad I\in \mathcal{D}_k,\ k=0,1,\dots,n - 1.
\end{equation*}
Equivalently, this construction can be described as follows: First, put $\mathcal{B}_{[0,1)}(\theta) = \mathcal{D}_m$. After $\mathcal{B}_I(\theta)$ has been chosen for some $I\in \mathcal{D}_{<n}$, the successors $\{ K^{\pm } : K\in \mathcal{B}_I(\theta) \}$ are distributed evenly among $\mathcal{B}_{I^+}(\theta)$ and $\mathcal{B}_{I^-}(\theta)$: For every $K\in \mathcal{B}_I(\theta)$, if $\theta_K = 1$, then we add $K^+$ to $\mathcal{B}_{I^+}(\theta)$ and $K^-$ to $\mathcal{B}_{I^-}(\theta)$, and if $\theta = -1$, then we do the same with $K^+$ and $K^-$ swapped. The functions $(\hat{h}_I(\theta))_{I\in \mathcal{D}_{\le n}}$ are still given by equation~\eqref{eq:1}.

Now let $k,l\in \{ 0,\dots,n \}$ and fix two intervals $I\in \mathcal{D}_k$ and $J\in \mathcal{D}_l$. Let $\bar{K}_1,\dots,\bar{K}_M$ be an enumeration of the dyadic intervals in $\mathcal{D}_m$. Then we can write
\begin{equation*}
  \hat{h}_I(\theta) = \sum_{i=1}^M \theta_{K_i(\theta)}h_{K_i(\theta)}\qquad \text{and}\qquad
  \hat{h}_J(\theta) = \sum_{j=1}^M \theta_{L_j(\theta)}h_{L_j(\theta)},
\end{equation*}
where for each $i,j\in \{ 1,\dots,M \}$, $K_i(\theta)$ is a random subinterval of $\bar{K}_i$ and $L_j(\theta)$ is a random subinterval of $\bar{K}_j$. More precisely, for all $i$ and $j$, the interval~$K_i(\theta)$ is uniformly distributed on $\{ K\in \mathcal{D}_{m+k} : K\subset \bar{K}_i \}$, and $L_j(\theta)$ is uniformly distributed on $\{ L\in \mathcal{D}_{m+l} : L\subset \bar{K}_j \}$. For better readability, we abbreviate $K_i(\theta)$ and $L_j(\theta)$ by $K_i$ and $L_j$, respectively (still, these intervals depend on $\theta$). Note that for $j=i$, the intervals $K_i$ and $L_i$ are not independent (for example, they satisfy $K_i\subset L_i$ if and only if $I\subset J$ and, analogously, $K_i\supset L_i$ if and only if $I\supset J$).

Consider the random variable
 \begin{equation*}
   X_{I,J} = \langle \hat{h}_I(\theta), T \hat{h}_J(\theta) \rangle = \sum_{i,j=1}^M \theta_{K_i}\theta_{L_j} \langle h_{K_i}, T h_{L_j} \rangle.
 \end{equation*}
 We have
 \begin{equation*}
   X_{I,J}^2 = \sum_{i,i'=1}^M\sum_{j,j'=1}^M \theta_{K_i}\theta_{K_{i'}}\theta_{L_j}\theta_{L_{j'}} \langle h_{K_i}, Th_{L_j} \rangle \langle h_{K_{i'}}, Th_{L_{j'}} \rangle.
 \end{equation*}
 Next, we compute the expected value and upper bounds for the variance of $X_{I,J}$.

 \textbf{Expected value of $X_{I,J}$.}
 We assume that $k\ge l$ (the case $k<l$ is analogous). Since none of the intervals $K_1,\dots,K_M$ and $L_1,\dots,L_M$ depends on the signs $(\theta_K)_{K\in \mathcal{D}_{m+k}}$, we have
 \begin{equation}\label{eq:4}
   \mathbb{E} X_{I,J} = \mathbb{E} \sum_{i,j=1}^M \mathbb{E}_{K_i} (\theta_{K_i} \theta_{L_j}) \langle h_{K_i}, T h_{L_j} \rangle.
 \end{equation}
 If $I \ne J$, then equation~\eqref{eq:4} yields $\mathbb{E}X_{I,J} = 0$ since we always have $K_i\ne L_j$ for $i,j\in\{ 1,\dots,M \}$. On the other hand, if $I = J$, then we obtain
\begin{align*}
   \mathbb{E} X_{I,I} &= \mathbb{E} \sum_{i,j=1}^M \mathbb{E}_{K_i} (\theta_{K_i} \theta_{K_j}) \langle h_{K_i}, T h_{K_j} \rangle
    = \mathbb{E} \sum_{i=1}^M \langle h_{K_i}, T h_{K_i} \rangle\\
  &= \sum_{i=1}^M \ave_{\substack{K\in \mathcal{D}_{m+k}\\ K\subset \bar{K}_i}} \langle h_K, Th_K \rangle
  = 2^m \ave_{K\in \mathcal{D}_{m+k}} \langle h_K, Th_K \rangle
  = |I| \ave_{K\in \mathcal{D}_{m+k}} \frac{\langle h_K, Th_K \rangle}{|K|}.
\end{align*}

\textbf{Variance of $X_{I,J}$ for $I\ne J$.}
First, we assume that $I \ne J$ and $k\ge l$. Since $K_i\ne L_j$ for all $i,j$, and since none of the intervals $K_i$, $K_{i'}$, $L_j$, $L_{j'}$ depends on the signs $(\theta_K)_{K\in \mathcal{D}_{m+k}}$, we have
\begin{align*}
  \mathbb{E} X_{I,J}^2 &= \mathbb{E}\sum_{i,i'=1}^M\sum_{j,j'=1}^M \mathbb{E}_{K_i} \mathbb{E}_{K_{i'}} (\theta_{K_i}\theta_{K_{i'}})\, \theta_{L_j}\theta_{L_{j'}} \langle h_{K_i}, Th_{L_j} \rangle \langle h_{K_{i'}}, Th_{L_{j'}} \rangle\\
  &= \mathbb{E}\sum_{i=1}^M \sum_{j,j'=1}^M \theta_{L_j}\theta_{L_{j'}}\langle h_{K_i}, Th_{L_j} \rangle \langle h_{K_i}, Th_{L_{j'}} \rangle.
\end{align*}
Now consider fixed indices $i,j,j'$. If $j\ne j'$, then we must have $j\ne i$ or $j'\ne i$. Suppose that $j\ne i$. Then none of the intervals $L_j$, $L_{j'}$, $K_i$ depends on the signs $(\theta_L : L\in \mathcal{D}_{m+l},\, L\subset \bar{K}_j)$. Thus, we have
\begin{equation*}
  \mathbb{E} \bigl[ \theta_{L_j}\theta_{L_{j'}}\langle h_{K_i}, Th_{L_j} \rangle \langle h_{K_i}, Th_{L_{j'}} \rangle \bigr]
  = \mathbb{E} \bigl[  (\mathbb{E}_{L_j} \theta_{L_j})\theta_{L_{j'}}\langle h_{K_i}, Th_{L_j} \rangle \langle h_{K_i}, Th_{L_{j'}} \rangle \bigr]
    = 0.
\end{equation*}
The same holds if $j' \ne i$. Hence, we obtain
\begin{equation}\label{eq:3}
  \mathbb{E}X_{I,J}^2 = \mathbb{E} \sum_{i,j=1}^M \langle h_{K_i}, Th_{L_j} \rangle^2.
\end{equation}
If $k\le l$, then equation~\eqref{eq:3} holds by analogous arguments.

Following~\cite{MR3990955}, we will now estimate~\eqref{eq:3} in two different ways. We fix $\theta$, put $a_{K_i,L_j} = \langle h_{K_i}, Th_{L_j} \rangle$ for $i,j=1,\dots,M$ and note that
\begin{equation*}
  |a_{K_i,L_j}|\le \|h_{K_i}\|_{Y^{*}}\|T\| \|h_{L_j}\|_Y.
\end{equation*}
By the $1$-unconditionality of $(h_{L_j})_{j=1}^M$ in $Y$, we have
\begin{align*}
  \sum_{i=1}^M\sum_{j=1}^M \langle h_{K_i}, Th_{L_j} \rangle^2
  &= \sum_{i=1}^M \Big\langle h_{K_i}, T\Big( \sum_{j=1}^M a_{K_i,L_j} h_{L_j} \Big) \Big\rangle\\
  &\le \sum_{i=1}^M \|h_{K_i}\|_{Y^{*}} \|T\| \max_{j=1,\dots,M} |a_{K_i,L_j}| \Big\| \sum_{j=1}^M h_{L_j} \Big\|_Y\\
  &\le \|T\|^2\sum_{i=1}^M \|h_{K_i}\|_{Y^{*}}^2\max_{j=1,\dots,M}\|h_{L_j}\|_Y.
\end{align*}
Now we use the inequalities
\begin{equation*}
  \|h_{L_j}\|_Y\le \|h_{\bar{K}_1}\|_Y\qquad
  \text{and}\qquad
  \|h_{K_i}\|_{Y^{*}}^2 = \|h_{K_i}\|_{Y^{*}}\frac{|K_i|}{\|h_{K_i}\|_Y}\le\|h_{\bar{K}_1}\|_{Y^{*}} \frac{|K_i|}{|I|\|h_{\bar{K}_1}\|_Y}
\end{equation*}
to conclude that
\begin{equation}\label{eq:5}
  \sum_{i=1}^M\sum_{j=1}^M \langle h_{K_i}, Th_{L_j} \rangle^2
  \le \|T\|^2 \|h_{\bar{K}_1}\|_{Y^{*}} \sum_{i=1}^M \frac{|K_i|}{|I|} = \|T\|^2 \|h_{\bar{K}_1}\|_{Y^{*}}.
\end{equation}
On the other hand, we can rewrite~\eqref{eq:3} using
\begin{equation*}
  \sum_{i=1}^M\sum_{j=1}^M \langle h_{K_i}, Th_{L_j} \rangle^2 = \sum_{j=1}^M \Big\langle \sum_{i=1}^M a_{K_i,L_j} h_{K_i}, Th_{L_j} \Big\rangle.
\end{equation*}
Then we perform an analogous computation to the one above, using \Cref{pro:HSHS-1d}~\eqref{pro:HSHS-1d:v} (note that $\|\cdot \|_{Y^{*}}\le \|\cdot \|_{X(\mathbf{r})^{*}}$) as well as \Cref{pro:HSHS-1d}~\eqref{pro:HSHS-1d:iii} and \Cref{pro:HS-1d}~\eqref{pro:HS-1d:iii}. In the end, we obtain
\begin{equation}\label{eq:6}
  \sum_{i=1}^M\sum_{j=1}^M \langle h_{K_i}, Th_{L_j} \rangle^2 \le \|T\|^2\|h_{\bar{K}_1}\|_Y.
\end{equation}
Combining~\eqref{eq:5} and~\eqref{eq:6} yields
\begin{equation}\label{eq:var-IJ}
  \mathbb{V}X_{I,J}
  = \mathbb{E} X_{I,J}^2
  \le \|T\|^2 \sqrt{\|h_{\bar{K}_1}\|_Y\|h_{\bar{K}_1}\|_{Y^{*}}}
  = \|T\|^2 \sqrt{|\bar{K}_1|}
  \le \Gamma^2 2^{-m/2}.
\end{equation}

\textbf{Variance of $X_{I,I}$.}
We have
\begin{align*}
  \mathbb{E}X_{I,I}^2
  &= \mathbb{E}\sum_{i,i'=1}^M\sum_{j,j'=1}^M \theta_{K_i}\theta_{K_{i'}}\theta_{K_j}\theta_{K_{j'}} \langle h_{K_i}, Th_{K_j} \rangle\langle h_{K_{i'}}, Th_{K_{j'}} \rangle\\
  &= \mathbb{E}\sum_{i,i'=1}^M\sum_{j,j'=1}^M\mathbb{E}_{K_i}\mathbb{E}_{K_{i'}}\mathbb{E}_{K_j}\mathbb{E}_{K_{j'}} (\theta_{K_i}\theta_{K_{i'}}\theta_{K_j}\theta_{K_{j'}})\, \langle h_{K_i}, Th_{K_j} \rangle\langle h_{K_{i'}}, Th_{K_{j'}} \rangle.
\end{align*}
Now observe that we have
\begin{equation*}
 \mathbb{E}_{K_i}\mathbb{E}_{K_{i'}}\mathbb{E}_{K_j}\mathbb{E}_{K_{j'}} (\theta_{K_i}\theta_{K_{i'}}\theta_{K_j}\theta_{K_{j'}}) = 1
\end{equation*}
if one of the following conditions holds, and the expression is $0$ otherwise (cf.~\cite{MR3990955}):
\begin{enumerate}[(i)]\itemsep0em
  \item $i = j = i' = j'$
  \item $i = j \ne i' = j'$
  \item $i = i' \ne j = j'$
  \item $i = j' \ne i' = j$.
\end{enumerate}
Hence, we can write $\mathbb{E}X_{I,I}^2 = V_1 + V_2 + V_3 + V_4$, where
\begin{align*}
  V_1 &= \mathbb{E}\sum_{i=1}^M \langle h_{K_i}, Th_{K_i} \rangle^2,\\
  V_2 &= \mathbb{E}\sum_{i=1}^M\sum_{\substack{j=1\\j\ne i}}^M \langle h_{K_i}, Th_{K_i} \rangle\langle h_{K_j}, Th_{K_j} \rangle
  = \sum_{i=1}^M\sum_{\substack{j=1\\j\ne i}}^M \bigl(\mathbb{E}\langle h_{K_i}, Th_{K_i} \rangle\bigr)\bigl(\mathbb{E}\langle h_{K_j}, Th_{K_j} \rangle\bigr),\\
  V_3 &= \mathbb{E}\sum_{i=1}^M\sum_{\substack{j=1\\j\ne i}}^M \langle h_{K_i}, Th_{K_j} \rangle^2,\\
  V_4 &= \mathbb{E}\sum_{i=1}^M\sum_{\substack{j=1\\j\ne i}}^M \langle h_{K_i}, Th_{K_j} \rangle\langle h_{K_j}, Th_{K_i} \rangle.
\end{align*}
In general, we expect $V_2$ to be large, whereas the other terms will be small. Note that
\begin{equation*}
  (\mathbb{E} X_{I,I})^2
  = \Bigl(\sum_{i=1}^M \mathbb{E}\langle h_{K_i}, T h_{K_i} \rangle \Bigr)^2
  \ge V_2,
\end{equation*}
and thus,
\begin{equation}\label{eq:7}
  \mathbb{V}X_{I,I} \le V_1 + V_3 + V_4.
\end{equation}
Hence, it suffices to give upper bounds for $V_1$, $V_3$ and $V_4$. We have
\begin{equation*}
  V_1 \le \mathbb{E}\sum_{i=1}^M \|h_{K_i}\|_{Y^{*}}^2\|T\|^2\|h_{K_i}\|_Y^2
        = \|T\|^2\, \mathbb{E}\sum_{i=1}^M|K_i|^2 = \|T\|^2|I|^22^{-m}.
\end{equation*}
Moreover, the double sums in $V_3$ and $V_4$ can be estimated using the same arguments as in the computation of $\mathbb{V} X_{I,J}$ for $I\ne J$, yielding
\begin{equation*}
  V_3\le \|T\|^2 2^{-m/2}\qquad \text{and}\qquad V_4\le \|T\|^2 2^{-m/2}.
\end{equation*}
By plugging these estimates into~\eqref{eq:7}, we obtain
\begin{equation}\label{eq:var-II}
  \mathbb{V}X_{I,I}\le 3\|T\|^2 2^{-m/2}\le 3\Gamma^2 2^{-m/2}.
\end{equation}

\textbf{Conclusion of the proof.}
As in~\cite{MR3990955}, we consider the off-diagonal events
\begin{equation*}
  O_{I,J} = \{ \theta\in \{ \pm 1 \}^{\mathcal{D}_{\le N}} : |X_{I,J}(\theta)|\ge \eta_0 \},\qquad I, J\in \mathcal{D}_{\le n},\ I\ne J
\end{equation*}
and the diagonal events
\begin{equation*}
  D_I = \{ \theta \in \{ \pm 1 \}^{\mathcal{D}_{\le N}} : |X_{I,I}(\theta) - \mathbb{E}X_{I,I}| \ge \eta_0 \},\qquad I\in \mathcal{D}_{\le n}.
\end{equation*}
Using Chebyshev's inequality and our upper bounds~\eqref{eq:var-IJ} and~\eqref{eq:var-II} for $\mathbb{V}X_{I,J}$ and $\mathbb{V}X_{I,I}$, we obtain
\begin{equation*}
  \mathbb{P}(O_{I,J})\le \frac{\Gamma^2}{2^{m/2}\eta_0^2}\quad \text{and} \quad \mathbb{P}(D_I)\le \frac{3\Gamma^2}{2^{m/2}\eta_0^2},
  \qquad I, J\in \mathcal{D}_{\le n},\ I\ne J.
\end{equation*}
Hence, by~\eqref{eq:8}, we have
\begin{equation*}
  \mathbb{P}\Big( \bigcup_{\substack{I,J\in \mathcal{D}_{\le n}\\ I\ne J}}O_{I,J}\cup \bigcup_{I\in \mathcal{D}_{\le n}} D_I \Big)
  \le \sum_{\substack{I,J\in \mathcal{D}_{\le n}\\ I\ne J}} \mathbb{P}(O_{I,J}) + \sum_{I\in \mathcal{D}_{\le n}} \mathbb{P}(D_I)
  \le \frac{2^{2(n+2)}\Gamma^2}{2^{m/2}\eta_0^2}< 1.
\end{equation*}
Thus, we can find at least one family of signs $\theta \in \{ \pm 1 \}^{\mathcal{D}_{\le N}}$ such that the corresponding faithful Haar system $(\hat{h}_I)_{I\in \mathcal{D}_{\le n}} = (\hat{h}_I(\theta))_{I\in \mathcal{D}_{\le n}}$ satisfies
\begin{align}
  \frac{|\langle \hat{h}_I, T \hat{h}_J \rangle|}{|I|}
  &\le \frac{\eta_0}{|I|}\le 2^n\eta_0,
  \qquad I,J\in \mathcal{D}_{\le n},\ I\ne J,\label{eq:10}\\
  \Bigl|\frac{\langle \hat{h}_I, T \hat{h}_I \rangle}{|I|} - \ave_{|K| = 2^{-m}|I|} \frac{\langle h_K, Th_K \rangle}{|K|}\Bigr|
  &\le \frac{\eta_0}{|I|}
  \le 2^n\eta_0,
  \qquad I\in \mathcal{D}_{\le n}.\label{eq:11}
\end{align}

Now let $\hat{A}\colon Y_N\to Y_n$ and $\hat{B}\colon Y_n\to Y_N$ denote the operators associated with $(h_I)_{I\in \mathcal{D}_{\le n}}$ as defined in \Cref{pro:operators-Ahat-Bhat}. Put
\begin{equation*}
  d_I = \frac{\langle \hat{h}_I, T \hat{h}_I \rangle}{|I|}, \qquad I\in \mathcal{D}_{\le n},
\end{equation*}
and consider the Haar multiplier $D\colon Y_n\to Y_n$ defined as the linear extension of $Dh_I = d_Ih_I$, $I\in \mathcal{D}_{\le n}$. We have $|d_I| \le \|T\|$ for all $I\in \mathcal{D}_{\le n}$ (this follows, e.g., from \Cref{pro:operators-Ahat-Bhat} and \Cref{pro:HSHS-1d}~\eqref{pro:HSHS-1d:single-layer}).
Observe that by the inequalities~\eqref{eq:11} and $2^n\eta_0\le 8^{-n}\eta\delta$ (cf.~\eqref{eq:8}), we have $|d_I - d_J|\le 8^{-n}\eta\delta$ whenever $|I| = |J|$. Moreover, if $T$ has $\delta$-large positive diagonal, i.e., $\langle h_K, Th_K \rangle\ge \delta|K|$ for all $K\in \mathcal{D}_{\le N}$, then inequality~\eqref{eq:11} together with $2^n\eta_0\le \eta\delta$ implies that $d_I\ge (1 - \eta)\delta$ for all $I\in \mathcal{D}_{\le n}$.

Finally, let $x = \sum_{J\in \mathcal{D}_{\le n}} a_Jh_J\in Y_n$. Then, using~\eqref{eq:10} and the inequality $|a_J|\le \|x\|_Y/\|h_J\|_Y\le 2^n\|x\|_Y$ for $J\in \mathcal{D}_{\le n}$ (see \Cref{pro:HSHS-1d}~\eqref{pro:HSHS-1d:single-layer}), we obtain
\begin{align*}
  \|(\hat{A}T \hat{B} - D)x\|_Y &= \Big\| \sum_{J\in \mathcal{D}_{\le n}} \sum_{\substack{I\in \mathcal{D}_{\le n}\\ I\ne J}} a_J \frac{\langle \hat{h}_I, T \hat{h}_J \rangle}{|I|} h_I \Big\|_Y\\
  &\le 2^n\eta_0\sum_{J\in \mathcal{D}_{\le n}} \sum_{\substack{I\in \mathcal{D}_{\le n}\\ I\ne J}} |a_J|\|h_I\|_Y
  \le 2^{4n+2}\eta_0\|x\|_Y,
\end{align*}
and hence, by~\eqref{eq:8},
\begin{equation*}
  \|\hat{A}T \hat{B} - D\| \le 2^{4n+2}\eta_0 \le \eta\delta.\qedhere
\end{equation*}
\end{proof}

\section{Stabilization of Haar multipliers}\label{sec:stabilization}

Our next goal is to reduce the Haar multiplier~$D$ obtained from~\Cref{pro:diagonalization} to a constant multiple of the identity. This is achieved by first \emph{stabilizing}~$D$ and then employing a perturbation argument. The infinite-dimensional stabilization methods from~\cite{MR4430957,LechnerSpeckhofer2023,LechnerMotakisMuellerSchlumprecht2023} involving cluster points and probabilistic methods will be replaced by a finite, combinatorial version which just uses the pigeonhole principle. Note that in the stabilization part, we will also make use of the fact that the entries of $D$ are already stable along every level of the dyadic tree (see \Cref{pro:diagonalization}~\eqref{pro:diagonalization:item:2}).

\begin{pro}\label{pro:stabilization}
  Let $Y$ be a Haar system Hardy space, and let $\Gamma,\delta,\eta > 0$. Moreover, let $n, N\in \mathbb{N}_0$ be chosen such that
  \begin{equation}\label{eq:110}
    N \ge 42n(n+1)\biggl\lceil \frac{\Gamma}{\eta\delta} \biggr\rceil + 42 + \Bigl\lfloor 4\log_2\Bigl(\frac{\Gamma}{\eta\delta}\Bigr) \Bigr\rfloor.
  \end{equation}
  Then for every linear operator $T\colon Y_N\to Y_N$ with $\|T\|\le \Gamma$, there exists a scalar~$c$ with $|c|\le \|T\|$ such that $cI_{Y_n}$ projectionally factors through~$T$ with constant~$1$ and error~$3\eta\delta$. Moreover, if $T$ has $\delta$-large positive diagonal, then we have $c\ge (1-\eta)\delta$.
  If the Haar system is $K$-unconditional in~$Y$ for some $K\ge 1$, then inequality~\eqref{eq:110} can be replaced by
  \begin{equation*}
    N \ge 42n\biggl\lceil \frac{K\Gamma}{\eta\delta} \biggr\rceil + 42 + \Bigl\lfloor 4\log_2\Bigl(\frac{\Gamma}{\eta\delta}\Bigr) \Bigr\rfloor.
  \end{equation*}
\end{pro}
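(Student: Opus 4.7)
The plan is to perform two successive reductions: first use Proposition~\ref{pro:diagonalization} to replace $T$ by a Haar multiplier $D$ on an intermediate space $Y_{\tilde n}$, and then stabilize $D$ to a scalar multiple of the identity on $Y_n$ via the pigeonhole principle and a faithful-Haar-system construction. Writing $m = \lceil \Gamma/(\eta\delta)\rceil$, the intermediate level will be $\tilde n + 1 = 2n(n+1)m + 2$ (respectively $\tilde n + 1 = 2n\lceil K\Gamma/(\eta\delta)\rceil + 2$ in the $K$-unconditional case); inequality~\eqref{eq:110} is calibrated so that $N \ge 21(\tilde n + 1) + \lfloor 4\log_2(\Gamma/(\eta\delta))\rfloor$, which is precisely the hypothesis required by Proposition~\ref{pro:diagonalization}. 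Applying that proposition to $T$ yields operators $\hat A\colon Y_N \to Y_{\tilde n}$ and $\hat B\colon Y_{\tilde n}\to Y_N$ with $\hat A\hat B = I_{Y_{\tilde n}}$, $\|\hat A\|\|\hat B\|\le 1$, and a Haar multiplier $D\colon Y_{\tilde n}\to Y_{\tilde n}$ with $\|\hat A T\hat B - D\|\le \eta\delta$; its entries $d_I$ lie in $[-\|T\|,\|T\|]$ and deviate at most $8^{-\tilde n}\eta\delta$ within each level $\mathcal D_k$, so there is a well-defined level representative $\tilde d_k$, which in the positive-diagonal case satisfies $\tilde d_k \ge (1-\eta)\delta$.

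For the stabilization, note that the $\tilde n + 1$ values $\tilde d_0,\dots,\tilde d_{\tilde n}$ all lie in $[-\Gamma,\Gamma]$. Partitioning this interval into bins of width $\eta\delta/n$ produces at most $\lceil 2n\Gamma/(\eta\delta)\rceil \le 2nm$ bins, and the choice $\tilde n + 1 = 2n(n+1)m + 2$ is calibrated so that the pigeonhole principle forces some bin to contain at least $n+1$ of the $\tilde d_k$. I would then pick frequencies $k_0 < k_1 < \cdots < k_n$ in $\{0,1,\dots,\tilde n\}$ whose representatives $\tilde d_{k_i}$ all lie in such a bin, and set $c := \tilde d_{k_0}$ (or $c := \min_i \tilde d_{k_i}$ in the positive-diagonal case, so that $c\ge (1-\eta)\delta$); then $|\tilde d_{k_i} - c| \le \eta\delta/n$ for all $i$. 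A faithful Haar system $(\check h_I)_{I\in\mathcal D_{\le n}}$ with frequencies $k_0,\dots,k_n$ is built inside $Y_{\tilde n}$ by the standard recursion $\mathcal B_{[0,1)} = \mathcal D_{k_0}$ and $\mathcal B_{I^{\pm}} = \{K \in \mathcal D_{k_{i+1}} : K \subset \{\check h_I = \pm 1\}\}$ for $I\in\mathcal D_i$, $i<n$, with all signs $+1$; Proposition~\ref{pro:operators-Ahat-Bhat} then furnishes operators $A\colon Y_{\tilde n}\to Y_n$ and $B\colon Y_n\to Y_{\tilde n}$ with $AB = I_{Y_n}$ and $\|A\| = \|B\| = 1$.

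For the final assembly, let $D'$ be the Haar multiplier on $Y_{\tilde n}$ whose entries equal $\tilde d_k$ on $\mathcal D_k$. Lemma~\ref{lem:multiplier-lemma} gives $\|D - D'\| \le \tilde n\cdot 8^{-\tilde n}\eta\delta$, which is negligible. Since $\check h_I$ is Haar-supported on $\mathcal D_{k_i}$ for $I\in\mathcal D_i$, the operator $AD'B$ acts on $h_I$ as multiplication by $\tilde d_{k_i}$, and a second application of Lemma~\ref{lem:multiplier-lemma} to $AD'B - cI_{Y_n}$ bounds its norm by $\sum_{i=1}^n |\tilde d_{k_i} - c|\le n\cdot(\eta\delta/n) = \eta\delta$. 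Setting $\tilde A := A\hat A$ and $\tilde B := \hat B B$ gives $\tilde A\tilde B = A(\hat A\hat B)B = I_{Y_n}$, $\|\tilde A\|\|\tilde B\|\le 1$, and the triangle inequality yields
\[
\|\tilde A T\tilde B - cI_{Y_n}\| \le \|A(\hat A T\hat B - D)B\| + \|A(D - D')B\| + \|AD'B - cI_{Y_n}\| \le 3\eta\delta,
\]
while $|c|\le\|T\|$ is immediate.

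The unconditional case proceeds identically, except that the sum-type bound of Lemma~\ref{lem:multiplier-lemma} is replaced by the sharper max-type multiplier bound $\|M\|\le K\sup_I|m_I|$, a direct consequence of $K$-unconditionality of the Haar basis in $Y$; this allows pigeonhole bins of width $\eta\delta/K$ instead of $\eta\delta/n$, and thereby reduces the required $\tilde n$ to be linear in $n$, matching the sharper bound on $N$ stated for this case. The main conceptual obstacle is precisely this dichotomy: without unconditionality one can only control multiplier norms by a sum over the $n$ levels, which is exactly what forces the quadratic factor $n(n+1)$ in~\eqref{eq:110}.
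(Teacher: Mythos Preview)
Your proposal is correct and follows essentially the same route as the paper: apply Proposition~\ref{pro:diagonalization} at the intermediate level $\tilde n = 2n(n+1)\lceil\Gamma/(\eta\delta)\rceil+1$, pigeonhole the level representatives into bins of width roughly $\eta\delta/n$ to select frequencies $k_0<\cdots<k_n$, build a faithful Haar system with those frequencies, and use the level-wise multiplier bound (Lemma~\ref{lem:multiplier-lemma}) to control the resulting error. The only cosmetic differences are that the paper works directly with $D^{\mathrm{stab}}=\hat A D\hat B$ rather than passing through your level-constant intermediary $D'$, and it uses bin width $\eta\delta/(n+1)$; also, your parenthetical ``or $c:=\min_i\tilde d_{k_i}$'' in the positive-diagonal case is unnecessary, since Proposition~\ref{pro:diagonalization}\,(iii) already gives $\tilde d_{k_0}\ge(1-\eta)\delta$.
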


\begin{proof}
Let $n\in \mathbb{N}_0$ and fix $\Gamma,\delta,\eta > 0$. Put
\begin{equation}\label{eq:111}
 \tilde{n} = 2n(n+1) \biggl\lceil \frac{\Gamma}{\eta\delta} \biggr\rceil + 1 \ge n\biggl\lceil \frac{2\Gamma(n+1)}{\eta\delta} \biggr\rceil + 1
\end{equation}
and let $N\in \mathbb{N}_0$ satisfy~\eqref{eq:110}, i.e., $N\ge 21(\tilde{n}+1) + \lfloor 4\log_2(\frac{\Gamma}{\eta\delta}) \rfloor$. Let $T\colon Y_N\to Y_N$ be a linear operator with $\|T\|\le \Gamma$. By \Cref{pro:diagonalization}, there exists a Haar multiplier $D\colon Y_{\tilde{n}}\to Y_{\tilde{n}}$ with entries $(d_K)_{K\in \mathcal{D}_{\le \tilde{n}}}$ such that $D$ projectionally factors through~$T$ with constant~$1$ and error~$\eta\delta$, and such that $|d_K|\le \|T\|\le \Gamma$ for all $K\in \mathcal{D}_{\le \tilde{n}}$ and
\begin{equation}\label{eq:9}
  |d_K - d_L|\le 8^{-\tilde{n}}\eta\delta\le 8^{-n}\eta\delta\quad \text{whenever }\quad |K| = |L|.
\end{equation}
Moreover, if $T$ has $\delta$-large positive diagonal, then we have $d_I \ge (1-\eta)\delta$ for all $I\in \mathcal{D}_{\le \tilde{n}}$.
Now consider the entries $d_{[0,2^{-k})}$, $0\le k\le \tilde{n}$. Divide the interval $[-\Gamma,\Gamma]$ into $\lceil 2\Gamma(n+1)/(\eta\delta) \rceil$ subintervals of length at most $\eta\delta/(n+1)$. Then, by inequality~\eqref{eq:111} and the pigeonhole principle, we can find natural numbers $0\le k_0 < k_1 < \dots < k_n \le \tilde{n}$ such that
\begin{equation}\label{eq:113}
  |d_{[0,2^{-k_i})} - d_{[0,2^{-k_0})}| \le \frac{\eta\delta}{n+1},\qquad i = 0,\dots, n.
\end{equation}
Hence, after putting $c = d_{[0,2^{-k_0})}$, we have $|c|\le \|T\|$ and, by~\eqref{eq:9} and~\eqref{eq:113},
\begin{equation}\label{eq:12}
  |d_K - c|\le \frac{2\eta\delta}{n+1},\qquad K\in \mathcal{D}_{k_0}\cup \dots\cup \mathcal{D}_{k_n}.
\end{equation}
If $T$ has $\delta$-large positive diagonal, then we have $c \ge (1-\eta)\delta$.
Now let $(\hat{h}_I)_{I\in \mathcal{D}_{\le n}}$ be a faithful Haar system with frequencies $k_0,\dots,k_n$, i.e., for every $I\in \mathcal{D}_i$ ($0\le i\le n$), we have $\mathcal{B}_I\subset \mathcal{D}_{k_i}$, where $\mathcal{B}_I$ is the Haar support of $\hat{h}_I$. Let $\hat{A}\colon Y_{\tilde{n}}\to Y_n$ and $\hat{B}\colon Y_n\to Y_{\tilde{n}}$ denote the associated operators as defined in \Cref{pro:operators-Ahat-Bhat}. Then $D^{\mathrm{stab}} = \hat{A} D \hat{B}$ is also a Haar multiplier, and its entries $(d^{\mathrm{stab}}_I)_{I\in \mathcal{D}_{\le n}}$ satisfy
\begin{equation*}
  d^{\mathrm{stab}}_I = \frac{\langle \hat{h}_I, D \hat{h}_I \rangle}{|I|} = \sum_{K\in \mathcal{B}_I} d_K\frac{|K|}{|I|},\qquad I\in \mathcal{D}_{\le n}.
\end{equation*}
Together with inequality~\eqref{eq:12}, this implies that
\begin{equation}\label{eq:120}
  |d^{\mathrm{stab}}_I - c| \le \frac{2\eta\delta}{n + 1},\qquad I\in \mathcal{D}_{\le n}.
\end{equation}
We will now show that $\|D^{\mathrm{stab}} - cI_{Y_n}\|\le 2\eta\delta$. To this end, let $x = \sum_{I\in \mathcal{D}_{\le n}} a_Ih_I\in Y_n$. Then, exploiting the $1$-unconditionality of $(h_I)_{I\in \mathcal{D}_k}$ in~$Y$ for each $k\in \mathbb{N}_0$ and using \Cref{pro:HSHS-1d}~\eqref{pro:HSHS-1d:single-layer}, we obtain
\begin{align*}
  \|(D^{\mathrm{stab}} - cI_{Y_n})x\|_Y
  &= \Big\| \sum_{I\in \mathcal{D}_{\le n}} (d^{\mathrm{stab}}_I - c)a_Ih_I \Big\|_Y
    \le \sum_{k=0}^n \Big\| \sum_{I\in \mathcal{D}_k}(d_I^{\mathrm{stab}} - c)a_Ih_I \Big\|_Y\\
  &\le \frac{2\eta\delta}{n+1}\sum_{k=0}^n \Big\| \sum_{I\in \mathcal{D}_k} a_Ih_I \Big\|_Y
    \le \frac{2\eta\delta}{n+1}\sum_{k=0}^n \|x\|_Y = 2\eta\delta\|x\|_Y,
\end{align*}
i.e., $\|D^{\mathrm{stab}} - cI_{Y_n}\|\le 2\eta\delta$. Recall that $D$ projectionally factors through~$T$ with constant~$1$ and error~$\eta\delta$ and $D^{\mathrm{stab}}$ projectionally factors through~$D$ with constant~$1$ and error~$0$. By combining all these statements, we obtain that $cI_{Y_n}$ projectionally factors through~$T$ with constant~$1$ and error~$3\eta\delta$.

If the Haar system is $K$-unconditional in~$Y$, then we replace~\eqref{eq:111} by
\begin{equation*}
 \tilde{n} = 2n \biggl\lceil \frac{K\Gamma}{\eta\delta} \biggr\rceil + 1 \ge n\biggl\lceil \frac{2K\Gamma}{\eta\delta} \biggr\rceil + 1,
\end{equation*}
and we also replace the factor~$n+1$ by~$K$ in the pigeonhole argument and in the denominator of~\eqref{eq:120}. This inequality then directly implies that $\|D^{\mathrm{stab}} - cI_{Y_n}\|\le 2\eta\delta$, and the result follows as above.
\end{proof}

\begin{rem}
  If the space under consideration is~$L^1$, then of course, the results by Bourgain and Tzafriri~\cite{MR0890420} imply factorization results with linear dimension dependence (see~\Cref{sec:introduction-and-main-results}). However, our method can also be modified to yield a better result in~$L^1$ in the sense that inequality~\eqref{eq:110} is replaced by a bound of the form $N\ge C(\Gamma,\delta,\eta)\, n$: Replace the definition of~$\tilde{n}$ in the proof of \Cref{pro:stabilization} by $\tilde{n} = l\cdot n$ for some $l\ge 1$ to be determined later, and let $D\colon Y_{\tilde{n}}\to Y_{\tilde{n}}$ be the Haar multiplier obtained from \Cref{pro:diagonalization}. Write $d_k = d_{[0,2^{-k})}$ for $k=0,\dots,\tilde{n}$. Moreover, let $\bar{D}\colon Y_{\tilde{n}}\to Y_{\tilde{n}}$ be the Haar multiplier defined by $\bar{D}h_I = d_kh_I$ for all $I\in \mathcal{D}_k$, $0\le k\le \tilde{n}$. By~\eqref{eq:9} and the triangle inequality, we have $\|D - \bar{D}\|\le 2\eta\delta$. Next, we use a result by Semenov and Uksusov~\cite{MR2975943} (see also \cite{MR3397275,MR4430957} for alternative proofs), which characterizes the bounded Haar multipliers $M\colon L^1\to L^1$ by identifying a quantity $\vvvert M \vvvert$ that is equivalent to the operator norm of~$M$: In the formulation of~\cite[Theorem~2.6]{MR4430957}, we have
  \begin{equation*}
    \|M\| \sim \vvvert M \vvvert := \sup_{(I_k)} \Big( \sum_{k=0}^{\infty}|m_{I_k} - m_{I_{k+1}}| + \limsup_{k\to \infty}|m_{I_k}| \Big),
  \end{equation*}
  where the supremum is taken over all sequences $(I_k)_{k=0}^{\infty}$ in $\mathcal{D}^+$ such that $I_0 = \varnothing$, $I_1 = [0,1)$, and $I_{k+1} \in \{ I_k^+,I_k^- \}$ for every $k\ge 1$. A straightforward argument shows that we can restrict the domain to $Y_{\tilde{n}}$ and apply this result to $\bar{D}$ to obtain
  \begin{equation}\label{eq:72}
    \|\bar{D}\| \sim |d_0 - d_1| + \dots + |d_{\tilde{n}-1} - d_{\tilde{n}}| + |d_{\tilde{n}}|.
  \end{equation}
  Since $\|\bar{D}\|\le \Gamma+3\eta\delta$, we can find $0\le s\le \tilde{n} - n$ such that
  \begin{equation*}
    \frac{\Gamma + 3\eta\delta}{l} \gtrsim |d_s - d_{s+1}| + \dots + |d_{s+n-1} - d_{s+n}|.
  \end{equation*}
  By constructing a faithful Haar system $(\hat{h}_I)_{I\in \mathcal{D}_{\le n}}$ with frequencies $s,s+1,\dots,s+n$ and using the associated operators $\hat{A},\hat{B}$, we obtain a Haar multiplier $D^{\mathrm{stab}} = \hat{A}\bar{D}\hat{B}$, which satisfies $D^{\mathrm{stab}}h_I = d_{s+k}h_I$ for all $I\in \mathcal{D}_k$, $0\le k\le n$. Thus, applying~\eqref{eq:72} to~$D^{\mathrm{stab}}$ and choosing, for example, $l \ge \frac{\Gamma}{3\eta\delta} + 1$ yields $\|D^{\mathrm{stab}} - d_{s+n}I_{Y_n}\|\lesssim \eta\delta$.
\end{rem}
Despite the fact that the dimension dependence can be improved in the unconditional case and also in the case of~$L^1$ (and, by duality, in~$L^{\infty}$), we do not know the answer to the following question:
\begin{question}
Is it true that for every Haar system Hardy space~$Y$, an inequality of the form $N \ge C(\Gamma,\delta,\eta,Y)\, n$ is sufficient for the conclusion of \Cref{pro:stabilization} (and hence \Cref{thm:main-result}) to hold?
\end{question}

Using the stabilization result \Cref{pro:stabilization}, we are now able to prove our main result.

\begin{proof}[Proof of \Cref{thm:main-result}]
  We first prove~\eqref{thm:main-result:ii}. Suppose that the linear operator $T\colon Y_N\to Y_N$ with $\|T\|\le \Gamma$ has $\delta$-large positive diagonal with respect to the Haar system. Then by \Cref{pro:stabilization}, we can find a scalar $c\ge (1-\eta)\delta$ and operators $A\colon Y_N\to Y_n$ and $B\colon Y_n\to Y_N$ such that $\|A\|\|B\|\le 1$ and $\|cI_{Y_n} - ATB\|\le 3\eta\delta$. Thus,
  \begin{equation*}
    \|I_{Y_n} - \tfrac{1}{c}ATB\| \le \frac{3\eta\delta}{(1-\eta)\delta} =  \frac{3\eta}{1-\eta} < 1.
  \end{equation*}
  This implies that the operator $Q = \frac{1}{c}ATB$ is invertible, and its inverse satisfies
  \begin{equation*}
    \|Q^{-1}\|\le \frac{1}{1 - 3\eta/(1-\eta)}.
  \end{equation*}
  Hence, we have the factorization $I_{Y_n} = \frac{1}{c}ATBQ^{-1}$, where
  \begin{equation*}
    \|\tfrac{1}{c}A\|\|BQ^{-1}\|\le \frac{\|Q^{-1}\|}{c}\le \frac{1}{\delta(1 - 4\eta)} \le \frac{1 + \varepsilon}{\delta}.
  \end{equation*}

  To prove~\eqref{thm:main-result:i}, we assume that $\delta = 1$. By \Cref{pro:stabilization}, we can find a scalar~$c$ with $|c|\le \|T\|$ such that $cI_{Y_n}$ \emph{projectionally} factors through $T$ with constant~$1$ and error~$3\eta$. Thus, we also know that $(1-c)I_{Y_n}$ factors through $I_{Y_N} - T$ with the same constant and error. Note that we either have $c\ge \frac{1}{2}$ or $1 - c \ge \frac{1}{2}$. In the latter case, we replace $T$ by $I_{Y_N} - T$ and $c$ by $1 - c$. Then the proof can be completed in the same manner as above, with slightly modified estimates and an additional factor~$2$ in the factorization constant.

  The results for spaces in which the Haar system is unconditional follow from the last statement in \Cref{pro:stabilization}.
\end{proof}

\section{Reduction to positive diagonal}\label{sec:reduction-to-positive-diagonal}

Finally, we consider the case of an operator $\widetilde{T}\colon Y_{\widetilde{N}}\to Y_{\widetilde{N}}$ with $\delta$-large, not necessarily positive diagonal. If the Haar system is $K$-unconditional in~$Y$, then by composing~$\widetilde{T}$ with a Haar multiplier with entries $\pm 1$, we obtain an operator $T\colon Y_{\widetilde{N}}\to Y_{\widetilde{N}}$ with $\delta$-large \emph{positive} diagonal such that $T$ factors through~$\widetilde{T}$ with constant~$K$. Then \Cref{thm:main-result} can be applied to $T$.

In the general case, however, a more sophisticated construction is needed to reduce~$\widetilde{T}$ to an operator $T\colon Y_N\to Y_N$ with positive diagonal. We employ a discrete version of the \emph{Gamlen-Gaudet construction}. This technique goes back to~\cite{MR0328575} (see also~\cite{MR2157745}), and the infinite version was also utilized, for example, in~\cite{MR4145794,LechnerSpeckhofer2023}.
Due to the combinatorial arguments used in this method, we need to assume an inequality of the form $\widetilde{N}\ge C(\varepsilon) N^22^N$.

\begin{pro}\label{pro:positive-reduction}
  Let $Y$ be a Haar system Hardy space, and let $\varepsilon,\delta > 0$. Moreover, let $N,\widetilde{N}\in \mathbb{N}_0$ be chosen such that
  \begin{equation}\label{eq:13}
    \widetilde{N}\ge 2N \bigg\lceil \frac{N}{\varepsilon} + 1 \bigg\rceil 2^N.
  \end{equation}
  Then for every linear operator $\widetilde{T}\colon Y_{\widetilde{N}}\to Y_{\widetilde{N}}$ with $\delta$-large diagonal, there exists a linear operator $T\colon Y_N\to Y_N$ with $\delta$-large \emph{positive} diagonal such that $T$ factors through~$\widetilde{T}$ with constant~$2(1+\varepsilon)$.
\end{pro}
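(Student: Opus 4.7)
The plan is to apply a discrete Gamlen--Gaudet construction to build an almost faithful Haar system $(\tilde h_I)_{I\in\mathcal D_{\leq N}}$ in $Y_{\widetilde N}$ whose supports $\mathcal B_I = \{K_I\}$ are singletons. By \Cref{pro:operators-A-B-almost-faithful}, the associated operators $A, B$ yield a factorization $T := A\widetilde T B$ with $\|B\|\leq 1$ and $\|A\|\leq 1/\mu + \sum_k\max_I(|I|/|K_I|-1/\mu)$ where $\mu := |K_{[0,1)}|$. The singleton structure kills all off-diagonal contributions of $\widetilde T$ (an essential point since no bound on $\|\widetilde T\|$ is assumed): one computes
\[
  \langle h_I, T h_I\rangle \;=\; \frac{|I|}{|K_I|}\langle h_{K_I},\widetilde T h_{K_I}\rangle \;=\; \frac{|I|}{|K_I|}\,\sigma_{K_I}\,|\langle h_{K_I},\widetilde T h_{K_I}\rangle|,
\]
of magnitude at least $\delta|I|$ and sign $\sigma_{K_I} := \operatorname{sign}\langle h_{K_I},\widetilde T h_{K_I}\rangle$. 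The goal of the construction is to arrange the $K_I$'s so that the $\sigma_{K_I}$ share a common value $\tau\in\{\pm 1\}$; if $\tau = -1$ one finally replaces $A$ by $-A$ (preserving its norm) to obtain a $T$ with $\delta$-large \emph{positive} diagonal factoring through $\widetilde T$ with the same constant.

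The tree $(K_I)$ is constructed recursively. Pick $K_{[0,1)}$ at level $k_0\in\{0,1\}$ with $\mu := |K_{[0,1)}|\geq 1/2$ and $\sigma_{K_{[0,1)}} = \tau$; inspecting the three candidates $[0,1), [0,1/2), [1/2,1)$ and their signs shows that such a choice of $(K_{[0,1)}, \tau)$ always exists. Given $K_I$ at level $k_i$, let $k_{i+1} := k_i + L$ with $L := \lceil N/\varepsilon + 1\rceil$. Pigeonhole among the $2^{L-1}$ candidate dyadic subintervals of each half of $K_I$ at level $k_{i+1}$ produces a $\sigma = \tau$ candidate in each half; one then routes these into the correct halves via the free sign $\theta_{K_I}$ so as to satisfy the almost faithful condition $\mathcal B_{I^{\pm}}^*\subset\{\tilde h_I = \pm 1\}$. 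The spacing $L$ is large enough that $|I|/|K_I| - 1/\mu \leq \varepsilon/(N\mu)$ at every level, so summing over $k=1,\dots,N$ and using \Cref{pro:operators-A-B-almost-faithful} yields $\|A\|\|B\|\leq (1+\varepsilon)/\mu\leq 2(1+\varepsilon)$.

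\emph{Main obstacle.} The delicate point is the combinatorial existence step: for adversarial sign patterns, one half of some $K_I$ may contain no $\sigma = \tau$ subinterval at any depth (a sealed $-\tau$ subtree), in which case neither half-swapping via $\theta_{K_I}$ nor descending further helps. Handling this requires either locally enlarging $\mathcal B_I$ beyond a singleton while keeping the diagonal positive (since the off-diagonal of $\widetilde T$ then re-enters, this must be done with care, e.g., by selecting $K$'s from regions in which $\sigma = \tau$ predominates and using \Cref{lem:multiplier-lemma} to control a residual sign-correcting Haar multiplier), or by splitting into two sub-constructions with opposite signs on disjoint coarse subtrees and merging them. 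The depth budget $\widetilde N\geq 2N\lceil N/\varepsilon + 1\rceil 2^N$ is precisely calibrated to absorb this overhead: it provides $L$ pigeonhole levels per coarse step, $2N$ steps worth of room for sign alternation, and a factor $2^N$ so that each of the $2^N$ branches of the coarse tree can be allocated its own portion of the fine tree in which to perform this combinatorial search.
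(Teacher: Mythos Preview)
Your proposal has two gaps, one of which you acknowledge and one of which you do not.

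The unacknowledged gap is fatal: your norm estimate for $A$ is wrong in sign. With singleton supports $\mathcal{B}_I = \{K_I\}$ at dyadic levels $k_i = k_0 + iL$, for $I\in\mathcal{D}_i$ you have $|I|/|K_I| = 2^{k_0 + i(L-1)}$, while $1/\mu = 2^{k_0}$; hence
\[
  \frac{|I|}{|K_I|} - \frac{1}{\mu} \;=\; 2^{k_0}\bigl(2^{\,i(L-1)}-1\bigr),
\]
which grows exponentially in $i$ whenever $L>1$. The bound from \Cref{pro:operators-A-B-almost-faithful} therefore blows up, and increasing the spacing $L$ makes this \emph{worse}, not better. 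To obtain $\|A\|\le (1+\varepsilon)/\mu$ from that proposition you need $|\mathcal{B}_I^*| \ge |I|\mu/(1+\varepsilon/N)$ for every $I$; with singletons this forces $K_{I^\pm}$ to lie exactly one level below $K_I$, leaving no room at all for any sign-selection pigeonhole. The singleton strategy is thus incompatible with the required factorization constant, independently of the combinatorial obstruction you already flagged.

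The acknowledged ``sealed $-\tau$ subtree'' obstruction is also genuine, and your proposed fixes (enlarging $\mathcal{B}_I$ locally, sign-correcting multipliers, merging opposite-sign subtrees) are not worked out; but even if they were, the norm problem above would remain.

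The paper's argument sidesteps both issues at once by using \emph{non-singleton} collections. One sets $\mathcal{A}_1 = \{K: \langle h_K,\widetilde{T}h_K\rangle\ge\delta|K|\}$, forms its Gamlen--Gaudet generations $\mathcal{G}_k = \mathcal{G}_k(\mathcal{A}_1)$, and applies a single global pigeonhole to the telescoping product of ratios $|\mathcal{G}_{(j+1)N}^*|/|\mathcal{G}_{jN}^*|$ to locate a window $[s,s+N]$ with small total measure loss. One then takes $\mathcal{B}_{[0,1)}=\mathcal{G}_s$ and $\mathcal{B}_{I^\pm}=\{K\in\mathcal{G}_{s+k+1}:K\subset\{\tilde h_I=\pm1\}\}$, so $|\mathcal{B}_I^*|\approx |I|\mu$ and \Cref{pro:operators-A-B-almost-faithful} gives $\|A\|\le 2(1+\varepsilon)$. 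The price is that $\langle\tilde{h}_I,\widetilde{T}\tilde{h}_I\rangle$ now contains cross terms $\theta_K\theta_L\langle h_K,\widetilde{T}h_L\rangle$ with $K\ne L$ in $\mathcal{B}_I$; these are handled not by controlling them but by averaging over the signs $(\theta_K)_{K\in\mathcal{B}_I}$, which annihilates the cross terms in expectation and guarantees a realization with $\langle\tilde{h}_I,\widetilde{T}\tilde{h}_I\rangle\ge\delta|\mathcal{B}_I^*|$, still without any hypothesis on $\|\widetilde T\|$.
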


\begin{proof}
  Suppose that $\widetilde{N} \ge 2lN$ for some $l\in \mathbb{N}$ to be determined later, and let $\widetilde{T}\colon Y_{\widetilde{N}}\to Y_{\widetilde{N}}$ be a linear operator with $\delta$-large diagonal (with respect to the Haar basis). Put
  \begin{equation*}
    \mathcal{A}_1 = \{ K\in \mathcal{D}_{\le \widetilde{N}} : \langle h_K, \widetilde{T}h_K \rangle \ge \delta|K| \},\qquad
    \mathcal{A}_2 = \{ K\in \mathcal{D}_{\le \widetilde{N}} : \langle h_K, \widetilde{T}h_K \rangle \le -\delta|K| \}.
  \end{equation*}
  Thus, $\mathcal{A}_1$ and $\mathcal{A}_2$ form a partition of $\mathcal{D}_{\le \widetilde{N}}$. Observe that for every $K\in \mathcal{D}_{\widetilde{N}}$, there exists some $i_K\in \{ 1,2 \}$ such that there are more than $\widetilde{N}/2\ge lN$ intervals in~$\mathcal{A}_{i_K}$ which contain~$K$. Moreover, there exists $i\in \{ 1,2 \}$ such that in total, the intervals $K\in \mathcal{D}_{\widetilde{N}}$ with $i_K = i$ cover a set of measure at least $1/2$. We may assume without loss of generality that $i = 1$ (otherwise, we replace $\widetilde{T}$ by~$-\widetilde{T}$). Now, for a given collection of dyadic intervals $\mathcal{A}\subset \mathcal{D}$, we inductively define the generations
  \begin{align*}
    \mathcal{G}_0(\mathcal{A}) &= \{ K\in \mathcal{A} : K \text{ is maximal with respect to inclusion} \},\\
    \mathcal{G}_k(\mathcal{A}) &= \mathcal{G}_0\Big( \mathcal{A}\setminus \bigcup_{j=0}^{k-1}\mathcal{G}_j(\mathcal{A}) \Big),\qquad k\ge 1.
  \end{align*}
  We will use the abbreviation $\mathcal{G}_k = \mathcal{G}_k(\mathcal{A}_1)$, $k\ge 0$.
  Thus, $\mathcal{G}_0,\mathcal{G}_1,\dots$ are pairwise disjoint collections of pairwise disjoint dyadic intervals, and for every~$k$, the collection~$\mathcal{G}_k$ consists of those intervals $K\in \mathcal{A}_1$ which are strictly contained in exactly $k$ intervals of $\mathcal{A}_1$.
  In particular, we have $\mathcal{G}_0\supset \mathcal{G}_1\supset \cdots$, and if $k\ge 1$, then for every $K\in \mathcal{G}_k$, there exists an interval $L\in \mathcal{G}_{k-1}$ such that $K\subset L^+$ or $K\subset L^-$.
  See \Cref{fig:collections-Gk} for an example.
\begin{figure}[t]
  \centering \includegraphics{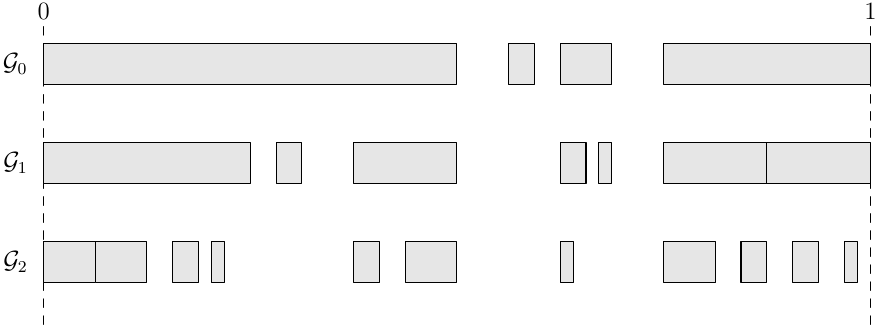}
  \caption{The collections $\mathcal{G}_k$}
  \label{fig:collections-Gk}
\end{figure}

 By the above considerations, we have $|\mathcal{G}_{lN}^{*}|\ge 1/2 \ge 1/2\, |\mathcal{G}_0^{*}|$. Thus,
  \begin{equation*}
    \frac{|\mathcal{G}_{lN}^{*}|}{|\mathcal{G}_{(l-1)N}^{*}|}
    \cdots
    \frac{|\mathcal{G}_{2N}^{*}|}{|\mathcal{G}_N^{*}|}
    \cdot \frac{|\mathcal{G}_N^{*}|}{|\mathcal{G}_0^{*}|} \ge \frac{1}{2},
  \end{equation*}
  and so there exists some $0\le s\le (l-1)N$ such that $|\mathcal{G}_{s+N}^{*}| \ge 2^{-1/l}|\mathcal{G}_s^{*}|$. By choosing
  \begin{equation*}
    l \ge \Bigl( \frac{N}{\varepsilon} + 1 \Bigr)2^N\ln 2 \ge -\frac{1}{\log_2\bigl( 1 - \frac{1}{(N/\varepsilon+1) 2^N} \bigr)},
  \end{equation*}
  we obtain
  \begin{equation}\label{eq:14}
    |\mathcal{G}_{s+N}^{*}| \ge \Bigl( 1 - \frac{1}{(N/\varepsilon+1)2^N} \Bigr) |\mathcal{G}_s^{*}|.
  \end{equation}

  Next, we construct an almost faithful Haar system $(\tilde{h}_I)_{I\in \mathcal{D}_{\le N}}$ by alternately choosing collections $\mathcal{B}_I\subset \mathcal{A}_1$ and signs $(\theta_K)_{K\in \mathcal{B}_I}$: First, we put $\mathcal{B}_{[0,1)} = \mathcal{G}_s$. If $\mathcal{B}_I\subset \mathcal{A}_1$ has already been constructed for some $I\in \mathcal{D}_k$, $0\le k\le N$, then we choose the signs $(\theta_K)_{K\in \mathcal{B}_I}$ uniformly at random from $\{ \pm 1 \}^{\mathcal{B}_I}$. Since the resulting function $\tilde{h}_I = \sum_{K\in \mathcal{B}_I} \theta_Kh_K$ satisfies
  \begin{align*}
    \mathbb{E} \langle \tilde{h}_I, \widetilde{T} \tilde{h}_I \rangle = \sum_{K\in \mathcal{B}_I} \langle h_K, \widetilde{T}h_K \rangle
    \ge \sum_{K\in \mathcal{B}_I} \delta |K| = \delta |\mathcal{B}_I^{*}|,
  \end{align*}
  there is at least one realization $(\theta_K)_{K\in \mathcal{B}_I}$ such that
  \begin{equation}\label{eq:15}
    \langle \tilde{h}_I, \widetilde{T} \tilde{h}_I \rangle\ge \delta|\mathcal{B}_I^{*}|.
  \end{equation}
  We choose such a realization to define~$\tilde{h}_I$. If $k<N$, then the successors $\mathcal{B}_{I^{\pm }}$ are given by
  \begin{equation*}
    \mathcal{B}_{I^{\pm }} = \bigl\{ K\in \mathcal{G}_{s+k+1} : K\subset \{ \tilde{h}_I = \pm 1 \} \bigr\},
  \end{equation*}
  and the construction continues.

  Now let $A\colon Y_{\widetilde{N}}\to Y_N$ and $B\colon Y_N\to Y_{\widetilde{N}}$ be the operators associated with the resulting system $(\tilde{h}_I)_{I\in \mathcal{D}_{\le N}}$. We will use \Cref{pro:operators-A-B-almost-faithful} to estimate $\|A\|$ and~$\|B\|$. Put $\mu = |\mathcal{B}_{[0,1)}^{*}| = |\mathcal{G}_s^{*}|\ge 1/2$. Note that for $1\le k\le N$ and $I\in \mathcal{D}_k$, we have $|\mathcal{B}_I^{*}| \le |I|\mu$, and thus,
  \begin{equation*}
    \max_{I\in \mathcal{D}_k} (|I|\mu - |\mathcal{B}_I^{*}|)
    \le \sum_{I\in \mathcal{D}_k} (|I|\mu - |\mathcal{B}_I^{*}|)
    = |\mathcal{G}_s^{*}| - |\mathcal{G}_{s+k}^{*}|
    \le \frac{\mu}{(N/\varepsilon+1)2^N},
  \end{equation*}
  where the last inequality follows from~\eqref{eq:14}. Hence, we have
  \begin{equation*}
    |\mathcal{B}_I^{*}| \ge \Big( 1-\frac{1}{N/\varepsilon+1} \Big) |I|\mu,\qquad I\in \mathcal{D}_{\le N},
  \end{equation*}
  and this implies that
  \begin{equation*}
    0\le \frac{|I|}{|\mathcal{B}_I^{*}|} - \frac{1}{\mu} \le \frac{\varepsilon}{N\mu},\qquad I\in \mathcal{D}_{\le N}.
  \end{equation*}
  Thus, by \Cref{pro:operators-A-B-almost-faithful}, we have $\|B\|\le 1$ and $\|A\|\le 2(1+\varepsilon)$. Now put $T = A\widetilde{T}B$. Then by~\eqref{eq:15}, $T$ has $\delta$-large \emph{positive} diagonal.
\end{proof}

\begin{proof}[Proof of \Cref{cor:large-diagonal}]
By combining \Cref{pro:positive-reduction} with \Cref{thm:main-result}, we obtain the claimed result (note that the operator~$T$ obtained from \Cref{pro:positive-reduction} satisfies $\|T\|\le 2(1+\varepsilon)\|\widetilde{T}\|$).
\end{proof}

We conclude by posing the following question:
\begin{question}
  Does the conclusion of \Cref{cor:large-diagonal} still hold if the right-hand side of inequality~\eqref{eq:301} is replaced by a sub-exponential (e.g., polynomial) function of~$N$?
\end{question}


\noindent\textbf{Acknowledgments.}
The author would like to thank Richard Lechner and Paul~F.X. Müller for many helpful discussions.
This article is part of the author's PhD~thesis, which is being prepared at the Institute of
Analysis, Johannes Kepler University Linz.

\bibliographystyle{abbrv}%
\bibliographystyle{plain}%
\bibliography{bibliography}%

\end{document}